\documentclass[11pt]{amsart}

\usepackage[T1]{fontenc}
\usepackage{amsmath,amssymb}
\usepackage{mathrsfs}
\usepackage{graphicx}
\usepackage{needspace}
\usepackage{xcolor}
\usepackage{url}
\usepackage{hyperref}

\definecolor{modra3}{rgb}{.1,.0,.4}
\hypersetup{colorlinks=true, linkcolor=modra3, urlcolor=modra3, citecolor=modra3, pdfpagemode=UseNone, pdfstartview=}

\usepackage{caption}
\renewcommand{\descriptionlabel}[1]%
  {{\hglue -0.7 cm}\hspace{\labelsep}#1}
\newcommand*{\Scale}[2][4]{\scalebox{#1}{\ensuremath{#2}}}
\newcommand\PP{{\mathscr P}}
\newcommand\FF{{\mathscr F}}
\newcommand{\PPi}{\mathbf{\Xi}}
\newcommand{\kode}[3]{{#1}_{#2}^{#3}}
\newcommand{\LL}{\mathscr L}
\newcommand{\CC}{\mathscr C}
\newcommand{\redit}[1]{\textcolor{red}{#1}}
\newcommand{\blueit}[1]{\textcolor{blue}{#1}}
\newcommand{\bpl}{{\Scale[3]{+}}}

\newtheorem{theorem}{Theorem}
\newtheorem{proposition}[theorem]{Proposition}
\newtheorem{observation}[theorem]{Observation}
\newtheorem{lemma}[theorem]{Lemma}

\theoremstyle{remark}
\newtheorem*{remark}{Remark}

\begin{document}

\title[$k$-arrangements of pseudolines and pseudocircles]{$k$-arrangements of pseudolines\\ and pseudocircles}

\author[Kyn\v{c}l]{Jan Kyn\v{c}l}
\address{Department of Applied Mathematics, Faculty of Mathematics and Physics, Charles University, Prague, Czech Republic}
\email{kyncl@kam.mff.cuni.cz}

\author[Medina]{Carolina Medina}
\address{Instituto de Ciencias B\'asicas e Ingenier\'\i a, Universidad Aut\'onoma del Estado de Hidalgo, Pachuca, Mexico.}
\email{carolina\_medina@uaeh.edu.mx}

\author[Salazar]{Gelasio Salazar}
\address{Instituto de F\'\i sica, Universidad Aut\'onoma de San Luis Potos\'{\i}, Mexico.}
\email{gsalazar@ifisica.uaslp.mx}

\subjclass[2020]{Primary 52C30; Secondary 05C10, 52C40}
\keywords{pseudoline arrangements, pseudocircle arrangements, meanders, Ramsey theory, regular systems of paths}

\begin{abstract}
A {\em $k$-arrangement of pseudolines} is a set of bi-infinite curves in the plane such that any two of them intersect each other in exactly $k$ points, at which they cross, and it is {\em simple} if no three curves meet at a common point. Cyclic arrangements are the only simple $1$-arrangements of pseudolines that are {\em unavoidable}, in the Ramsey spirit: for each fixed $m\ge 1$, every sufficiently large simple $1$-arrangement of pseudolines has a cyclic subarrangement of size $m$. We show that, for every $m\ge 3$, the number of unavoidable simple $k$-arrangements of pseudolines of size $m$ grows exponentially with $k$, independently of $m$. For even $k$, we prove an analogous result for $k$-arrangements of pseudocircles.
\end{abstract}

\maketitle

\section{Introduction}\label{sec:intro}

An {\em arrangement of pseudolines} is a set of bi-infinite curves in the plane such that any two of them intersect at exactly one point, at which they cross. An {\em arrangement of pseudocircles} is a set of simple closed curves in the plane such that any two intersect at exactly two points, at which they cross. 

An arrangement of pseudolines or pseudocircles is {\em simple} if no three curves intersect at a common point. Simple arrangements of pseudolines are of special interest because they provide a geometric model for uniform oriented matroids of rank $3$, as explained for instance in the survey by Felsner and Goodman~\cite{f0}.

Throughout this work, all arrangements under consideration are simple. If $A$ is an arrangement, a subset of $A$ is a {\em subarrangement} of $A$.

As in the standard reference by Matou\v{s}ek~\cite{matousek}, we adopt the notion that two arrangements of pseudolines are {\em isomorphic} if there is a self-homeomor\-phism of the plane that takes one into the other. Using the same criterion for isomorphism between arrangements of pseudocircles is unnecessarily restrictive, and we adopt the notion used by Felsner and Scheucher~\cite{Felsner2}: two arrangements of pseudocircles are {\em isomorphic} if they induce homeomorphic cell decompositions of the compactified plane, that is, of the sphere. Throughout this work we make no distinction between isomorphic arrangements (of pseudolines or pseudocircles).

\subsection{Unavoidable arrangements}

On the left hand side of Figure~\ref{fig:fig1} we illustrate the {\em cyclic} arrangement of $5$ pseudolines. The construction is naturally generalized to obtain the cyclic arrangement of $m$ pseudolines, for any positive integer $m$. In the spirit of the work by Pach, Solymosi, and T{\'o}th~\cite{Pach1}, cyclic arrangements are the {\em unavoidable} arrangements of pseudolines: for each positive integer $m$, every sufficiently large arrangement of pseudolines contains a cyclic subarrangement of size $m$. This was established by Ram\'\i rez-Alfons\'\i n~\cite{Alfonsin1}. We also note that from a purely combinatorial perspective this property is an immediate consequence of the Folkman-Lawrence Topological Representation Theorem. We refer the reader to the classic text by Bj\"orner, Las Vergnas, Sturmfels, White, and Ziegler~\cite[Chapter]{bjorner} for the underlying chirotope and oriented matroid machinery that guarantees such unavoidable substructures.

In the same vein, Medina, Ram\'\i rez-Alfons\'{\i}n and Salazar proved~\cite{pams} that for each integer $m\ge3$, there are three unavoidable arrangements of pseudocircles of size $m$, in the sense that every sufficiently large arrangement of pseudocircles contains one of these as a subarrangement. The unavoidable arrangements of pseudocircles of size $5$ are shown in Figure~\ref{fig:fig1}. Another result in this direction is the quantitative version of the Erd\H{o}s--Szekeres theorem for lines proved by B\'ar\'any, Rold\'an-Pensado, and T\'oth~\cite{barany1}.

\begin{figure}[ht!]
\centering
\resizebox{\textwidth}{!}{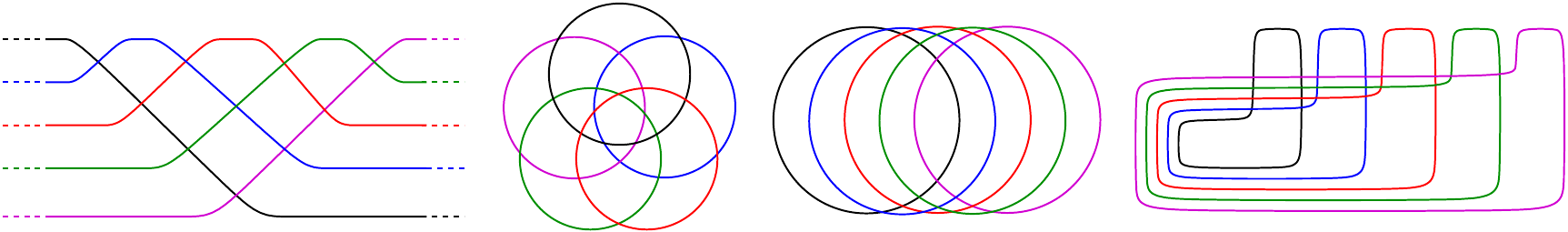}
\caption{The cyclic arrangement of size $5$ (left) is the only unavoidable arrangement of $5$ pseudolines. We also illustrate the three unavoidable arrangements of $5$ pseudocircles.}
\label{fig:fig1}
\end{figure}

\subsection{$k$-arrangements and the motivation behind this work}

Arrangements of pseudolines and pseudocircles are naturally generalized to admit the possibility of more crossings between the curves. Let $k$ be a positive integer. A $k$-{\em arrangement of pseudolines} is a collection of pseudolines that pairwise intersect in exactly $k$ points, at which they cross. If $k\ge 2$ is an even integer, then a $k$-{\em arrangement of pseudocircles} is a collection of pseudocircles that pairwise intersect in exactly $k$ points, at which they cross. Under this generalized notion, an arrangement of pseudolines, as defined above, is a $1$-arrangement of pseudolines, and an arrangement of pseudocircles is a $2$-arrangement of pseudocircles. 

This work was motivated by two questions raised by a reviewer of the paper~\cite{pams} (see~\cite[Section 5]{pams}): 

\begin{description}
\item[(I)] Which are the unavoidable $k$-arrangements of pseudocircles? 
\item[(II)] How does the number of unavoidable $k$-arrangements of pseudocircles increase with $k$? 
\end{description}

As we discuss in Section~\ref{sub:pacmea}, a full answer to (I) seems to be beyond hope at this moment. Our main result involves (II): we show that both for $k$-arrangements of pseudolines and pseudocircles the growth is exponential (see Theorem~\ref{thm:main} below).

\subsection{Unavoidable $k$-arrangements and packable $k$-arrangements}

Before any further discussion we must clarify the notion of an unavoidable collection of arrangements. Let $k\ge 1$ and $m\ge 2$ be integers. A collection $\LL_{k,m}$ of $k$-arrangements of pseudolines of size $m$ is {\em unavoidable} if it satisfies the following properties:

\begin{description}
\item[(U1)] there is an integer $n_0$ such that for each $n\ge n_0$, every $k$-arrangement of size $n$ has a subarrangement isomorphic to a $k$-arrangement in $\LL_{k,m}$; and
\item[(U2)] $\LL_{k,m}$ is minimal with respect to (U1).
\end{description}

The notion of an unavoidable collection $\CC_{k,m}$ of $k$-arrangements of pseudocircles of size $m$ is totally analogous.

It may not seem immediately obvious that for any $k\ge 1$ and $m\ge 2$ the collections $\LL_{k,m}$ and $\CC_{k,m}$ (if $k$ is even) are unique. As we shall now see, we establish their uniqueness from the equivalence of unavoidability with the notion of {\em packability}.

Let $A$ and $B$ be $k$-arrangements (of pseudolines or pseudocircles) of sizes $m$ and $n$, respectively, with $n\ge m$. We say that $B$ is $A$-{\em packed} if every subarrangement of $B$ of size $m$ is isomorphic to $A$. If for each $p\ge m$ there exists a $k$-arrangement of size $p$ that is $A$-packed, then $A$ is {\em packable}.

\begin{proposition}\label{pro:unavpack}
Let $k\ge1$ and $m\ge2$ be integers. Then $\LL_{k,m}$ is unique: it consists precisely of those $k$-arrangements of $m$ pseudolines that are packable. If $k$ is even, the same holds for $\CC_{k,m}$ and pseudocircles.
\end{proposition}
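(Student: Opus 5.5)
Let $\PP_{k,m}$ denote the set of packable $k$-arrangements of $m$ pseudolines. We will show that a collection of $k$-arrangements of size $m$ satisfies (U1) if and only if it contains $\PP_{k,m}$. Granting this, the unique minimal collection satisfying (U1) is $\PP_{k,m}$ itself, which is exactly the claim. The argument uses only that subarrangements of subarrangements are subarrangements and that isomorphism is preserved under taking subarrangements. So it transfers verbatim to pseudocircles for even $k$, with the sphere-based notion of isomorphism.

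\emph{Necessity.} Let $\LL$ satisfy (U1), and let $A\in\PP_{k,m}$. Choose $n\ge n_0$. By packability there is an $A$-packed $k$-arrangement $B$ of size $n$. By (U1), $B$ has a subarrangement of size $m$ isomorphic to some member of $\LL$. Every size-$m$ subarrangement of $B$ is isomorphic to $A$, so $A\in\LL$ up to isomorphism. Hence $\PP_{k,m}\subseteq\LL$.

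\emph{Sufficiency.} We show that $\PP_{k,m}$ itself satisfies (U1), via Ramsey's theorem for hypergraphs. The number of isomorphism classes of $k$-arrangements of $m$ pseudolines is finite: the arrangement is determined up to isomorphism by its finite cell decomposition, with $km(m-1)/2$ vertices. Call these classes $A_1,\dots,A_r$. Suppose for contradiction that for arbitrarily large $n$ there is a $k$-arrangement of size $n$ with no size-$m$ subarrangement in $\PP_{k,m}$.

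For each non-packable $A_i$ there is a $p_i\ge m$ such that no $A_i$-packed arrangement of size $p_i$ exists. If an $A_i$-packed arrangement of some size $q\ge p_i$ existed, any $p_i$ of its curves would form an $A_i$-packed arrangement of size $p_i$. Hence no $A_i$-packed arrangement of size at least $p_i$ exists. Let $p=\max_i p_i$ over the non-packable classes, and let $n\ge R$, where $R=R_m(p;r)$ is the $m$-uniform Ramsey number with $r$ colours.

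Take such an arrangement $B$ of size $n$ and colour each $m$-subset of its curves by the isomorphism class of the subarrangement it induces. Ramsey's theorem gives $p$ curves all of whose $m$-subsets have the same colour $A_i$. These curves form an $A_i$-packed arrangement of size $p\ge p_i$. Since $B$ avoids $\PP_{k,m}$, the class $A_i$ is non-packable, contradicting the choice of $p_i$. Therefore (U1) holds for $\PP_{k,m}$ with $n_0=R$.

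The main point needing care is the finiteness of the isomorphism classes, which makes Ramsey's theorem applicable. For pseudolines one can also argue that the arrangement is encoded by the sequence of crossings along each curve together with the orders at infinity. For pseudocircles it is the sphere cell decomposition, again a finite combinatorial map. The remaining steps are routine.
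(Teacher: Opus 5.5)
Your proof is correct and follows the paper's argument essentially step for step. Your necessity part is the paper's step (i), and your Ramsey argument (colour the $m$-subsets by isomorphism class, with threshold $\max_i p_i$ over the non-packable classes) is the paper's step (ii); you phrase it by contradiction, though it is really direct. Two cosmetic fixes, both as in the paper: take $n\ge\max\{n_0,m\}$ in the necessity step, and set $p=\max\{m,p_i\}$ so that the case with no non-packable classes is covered.
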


For instance, $\LL_{1,5}$ consists only of the arrangement of size $5$ illustrated in Figure~\ref{fig:fig1}, and $\CC_{2,5}$ consists of the three $2$-arrangements of pseudocircles also in Figure~\ref{fig:fig1}.

\begin{remark} 
For the rest of this paper we will not use again the term unavoidable, working instead under the equivalent notion of packable arrangements: $\LL_{k,m}$ (respectively, $\CC_{k,m}$) is the collection of those $k$-arrangements of pseudolines (respectively, pseudocircles) of size $m$ that are packable.
\end{remark}

\subsection{The main result}

We answer Question (II): the sizes of $\LL_{k,m}$ and $\CC_{k,m}$ increase exponentially with $k$, independently of $m$. 

\begin{theorem}\label{thm:main} 
There are absolute constants $a,b$ with $a{>}1$, such that for all sufficiently large $k$ the following holds. For all integers $m\ge 3$, 
\[
a^k < |\LL_{k,m}| < b^k, \text{\rm \hglue 0.3 cm and if $k$ is even\hglue 0.3 cm} a^k < |\CC_{k,m}| < b^k.
\]
\end{theorem}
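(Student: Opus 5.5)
Via Proposition~\ref{pro:unavpack}, I will count packable arrangements rather than unavoidable ones. The first step is a structural description of packable $k$-arrangements. I use a Ramsey argument on $m$-tuples, ordered by the left-to-right order of the pseudolines' ends, or for pseudocircles by a canonical order after fixing a reference. From it I expect an $A$-packed arrangement of size $p$, for large $p$, to be ``homogeneous'': the pattern of how any two curves interleave should be given by a single \emph{pair type}. Concretely, a pair of pseudolines crossing $k$ times is encoded by the permutation (a meander-type word) recording the order in which the $k$ crossings appear along the first curve versus along the second. Packability of $A$ should then be equivalent to all pairs of $A$ having the same pair type $\tau$, with the triples determined consistently by $\tau$. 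In this picture $A$ looks like a ``$\tau$-cyclic'' arrangement: a cyclic arrangement in which each crossing is replaced by a fixed local pattern. I would make this precise by showing that the isomorphism type of a homogeneous arrangement is determined by $\tau$ together with a bounded amount of extra data, independent of $m$.

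For the upper bound $|\LL_{k,m}|<b^k$, it then suffices to bound the number of possible pair types and the extra data. A pair of curves with $k$ transversal crossings is a meander system of order $k$. The number of such systems is at most $C^k$ (meanders are counted by at most $16^k$, since each is a pair of noncrossing matchings or arc systems). The extra data needed to glue pairs into an $m$-arrangement should contribute at most a constant or $2^{O(k)}$ factor, independently of $m$, because homogeneity forces every triple to be the same. This gives $b^k$; the pseudocircle case is analogous, with meanders on the sphere.

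For the lower bound I construct exponentially many packable types. I take a family of at least $a^k$ pairwise non-isomorphic meanders that are ``stackable'': they can be realized by replacing each crossing in a cyclic $1$-arrangement of $p$ pseudolines, for every $p$, by a local gadget, chosen so that every pair realizes the meander and every $m$-subset looks the same. A natural choice is a thin strip around each crossing in which the two curves perform a prescribed zigzag pattern coming from a word in two letters (twist blocks of lengths $1$ or $3$ summing to $k$, say). This gives exponentially many compositions of $k$, and the pair type, hence the isomorphism class, distinguishes them. Packability then holds because the construction is symmetric under the cyclic structure, so every $m$-subarrangement is the same gadgeted cyclic arrangement. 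For pseudocircles with even $k$, I would apply the same gadget replacement to a packable $2$-arrangement from~\cite{pams}, or double the pseudoline construction on the sphere.

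The main obstacle is the structural step: proving that packable arrangements are determined by a single pair type up to $2^{O(k)}$ choices, independently of $m$, which is what the upper bound requires. The hardest part is showing that the triple interactions (how a third curve threads through the meander of two others) are forced, and do not grow with $m$. I would first handle $m=3$ and then argue that an $A$-packed arrangement is determined by its $3$-subarrangements. A secondary check is that the lower-bound gadgets really produce non-isomorphic arrangements in the pseudocircle setting, where isomorphism is only up to the sphere.
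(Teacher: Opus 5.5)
Your proposal has genuine gaps in both halves.

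\textbf{Upper bound.} The argument rests entirely on the structural step that you flag as the main obstacle, and as formulated that step is partly false: pairs do not determine triples. Consider the paper's arrangements $A_3(W)$. They are obtained by concatenating $k$ bounded sections $S_3$ of the cyclic arrangement, or their mirror images $T_3$, according to a word $W$. All of them have the same pair type (two $x$-monotone curves crossing $k$ times), yet they give at least $2^k/4$ pairwise non-isomorphic packable arrangements. Since a fixed $\tau$ yields only one $\tau$-cyclic arrangement, packable arrangements are not $\tau$-cyclic in general, and the ``extra data'' is genuinely exponential. You give no argument that this data is $2^{O(k)}$ and independent of $m$. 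Knowing that all $3$-subarrangements are isomorphic does not pin down an $m$-arrangement either.

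Your closing remark points the right way, but the missing idea is that no structure theory is needed, only labelled bookkeeping.
\begin{itemize}
\item Label the curves and fix startpoints (directions for pseudolines). Record the crossings along each curve as symbols $j^{\pm}_s$, where $s$ is the rank of that crossing along the other curve $j$ and the sign records the side of $j$.
\item Such a code of size $m\ge 3$ is determined by its $3$-subcodes, because the relative order of any two symbols in a key is already visible in one triple.
\item Colour the triples of a large $A$-packed arrangement by their $3$-subcodes with respect to one fixed code, and apply Ramsey to triples, not to $m$-tuples. This gives a copy of $A$ with a code all of whose $3$-subcodes equal a single $\Pi_A$, and $A\mapsto\Pi_A$ is injective.
\end{itemize}
Hence $|\CC_{k,m}|$ and $|\LL_{k,m}|$ are at most the number of \emph{all} $3$-codes. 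That number is at most $|\PPi_{k,2}|^3\binom{2k}{k}^3$, where $|\PPi_{k,2}|$ is a polynomial in $k$ times a meander count.

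\textbf{Lower bound.} In a small disk around a crossing, the four ends of the two strands alternate on the boundary, so any gadget substituted for a crossing must have an odd number of crossings. Consequences:
\begin{itemize}
\item Substitution into the cyclic $1$-arrangement gives nothing for even $k$.
\item Substitution into a packable $2$-arrangement of pseudocircles reaches only $k\equiv 2\pmod 4$, unless you place different gadgets on the two crossings of each pair in a consistent way, which you do not address.
\item With monotone twist blocks, every composition of $k$ gives the same pair, so non-isomorphism needs a genuinely non-monotone gadget and a proof.
\end{itemize}
For odd $k$, inserting an arbitrary open meander at every crossing of the cyclic arrangement does work, and it is injective because any two curves recover the meander.

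The paper avoids the parity issue differently. It concatenates $k$ copies of $S_r$ or $T_r$ according to a word $W\in\{S_r,T_r\}^k$. For pseudolines the ends are extended to infinity; for pseudocircles (even $k$) they are joined by disjoint arcs. Every $m$-subarrangement of the size-$n$ version is the size-$m$ version. Only complementation and reversal (plus cyclic shifts for pseudocircles) identify words, which gives the bounds $2^k/4$ and $2^k/(4k)$.
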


The lower bounds are proved in Section~\ref{sec:lower}, and the upper bounds are proved in Sections~\ref{sec:codes} and~\ref{sec:upperbound}.

We remark that as a by-product of another project, Dobbins, Holmsen, and Hubard~\cite{DHH} characterized the packable $k$-arrangements of $x$-monotone pseudolines. In particular, their number grows exponentially with $k$, and so this in particular implies the lower bounds in Theorem~\ref{thm:main}. We briefly review this part of their work in Section~\ref{sub:regpat}. 

Aiming to have a paper as self-contained as possible, in Section~\ref{sec:lower} we give an elementary argument that proves the lower bounds in the theorem. Thus the core of the paper is devoted to the upper bounds. After some preliminary results on codes given in Section~\ref{sec:codes}, the proof of the upper bounds is given in Section~\ref{sec:upperbound}.

\subsection{Previous work on \texorpdfstring{$k$}{k}-arrangements}

Arrangements of pseudolines or pseudocircles that pairwise cross {\em at most} $k$ times (a slight variant of our notion) have been investigated by Chan~\cite{chan1,chan2,chan3}, by Edelsbrunner, Guibas, Pach, Pollack, Seidel, and Sharir~\cite{ede}, by Kyn{\v{c}}l,  Pach, Radoi{\v{c}}i{\'c}, and T{\'o}th~\cite{kprt}, by Ortner~\cite{ortner} and by Snoeyink and Hershberger~\cite{swee}.

The problem of realizing prescribed incidences between finite sets of abstract points and abstract lines by points and straight lines dates back to the nineteenth century. As explained for instance in the seminal treatise of Gr\"unbaum~\cite{gru3}, not every prescribed incidence structure can be realized by points and straight lines. Allowing arrangements of pseudolines gives more possibilities, but still does not suffice: the incidences of the Fano plane, for instance, cannot be realized even in this way. On the other hand, Bokowski, Kovi\v{c}, Pisanski, and \v{Z}itnik~\cite{bokowski2} proved that every finite incidence structure can be realized if one allows each pair of pseudolines to cross finitely many times instead of exactly once.

\subsection{Proof of Proposition~\ref{pro:unavpack}}

We prove the proposition for arrangements of pseudolines, as the argument for arrangements of pseudocircles is identical.

\begin{proof}[Proof of Proposition~\ref{pro:unavpack} for pseudoline arrangements]
Let $\PP_{k,m}$ denote the collection of packable $k$-arrangements of size $m$. We show that (i) $\PP_{k,m}$ is contained in every collection of $k$-arrangements of size $m$ that satisfies (U1); and that (ii) $\PP_{k,m}$ satisfies (U1). 

To see that (i) and (ii) imply the proposition we note that it follows from (i) and (ii) that $\PP_{k,m}$ satisfies (U2). Moreover, any collection satisfying (U1) and (U2) contains $\PP_{k,m}$ (by (i)), and so by minimality it must be exactly equal to $\PP_{k,m}$, and so the proposition follows.

For the rest of the proof for brevity we refer to a $k$-arrangement of pseudolines simply as an {\em arrangement}.

To prove (i) let $\FF_{k,m}$ be any collection of arrangements of size $m$ that satisfies (U1) with threshold $n_{\FF}$. Let $A\in \PP_{k,m}$. An $A$-packed arrangement of size $\max\{m,n_\FF\}$ has a subarrangement isomorphic to an element of $\FF_{k,m}$, and each of its $m$-subarrangements is isomorphic to $A$. Therefore necessarily $A$ is in $\FF_{k,m}$, and so $\PP_{k,m}\subseteq \FF_{k,m}$. Thus (i) follows.

To prove (ii) we first note that if $A$ is an arrangement of size $m$ that is not in $\PP_{k,m}$ then there is an $N_A$ such that no $A$-packed arrangement of size $N_A$ exists, and therefore none of any size larger than $N_A$, since every subarrangement of an $A$-packed arrangement is $A$-packed. Since there is a finite number of isomorphism classes of arrangements of size $m$, we may let $N=\max\{m,\,N_A\}$, where the maximum is taken over all isomorphism classes $A$ of arrangements of size $m$ that are not in $\PP_{k,m}$.

Now let $q$ be the number of isomorphism classes of arrangements of size $m$, and let $n_0$ be a Ramsey number such that every $q$-colouring of the $m$-subsets of an $n_0$-set has a monochromatic subset of size $N$. 

Let $B$ be an arrangement of size $n\ge n_0$, and colour each $m$-subset by its isomorphism class. 

By Ramsey's theorem $B$ has a monochromatic subset $S$ of size $N$. Thus $B[S]$ is an arrangement of size $N$ that is $C$-packed for some arrangement $C$. The choice of $N$ implies that $C$ is packable, since otherwise $N_C\le N$ and no $C$-packed arrangement of size $N$ would exist. Therefore $B$ contains a packable arrangement of size $m$, that is, an arrangement in $\PP_{k,m}$. Thus $\PP_{k,m}$ satisfies (U1), as claimed in (ii).
\end{proof}

\section{Proof of the lower bounds in Theorem~\ref{thm:main}}\label{sec:lower}

In the proof of the lower bounds of Theorem~\ref{thm:main} we use the following standard notation and terminology for words. Suppose that $W$ is a word $a_1 a_2\cdots a_\ell$ on a two-letter alphabet $\{\alpha,\beta\}$. We use $\overline{W}$ to denote the word obtained by performing a {\em complementary} operation on $W$, that is, interchanging $\alpha$ and $\beta$. We use $W^{-1}$ to denote the word obtained by performing a {\em reversal} operation on $W$, that is, $W^{-1}=a_\ell \cdots a_2 a_1$. Finally, for each $i=1,2,\ldots,\ell$ the {\em $i$th cyclic translation} of $W$ is the word $a_i a_{i+1} \cdots a_\ell a_1 \cdots a_{i-1}$.

\begin{proof}[Proof of the lower bounds in Theorem~\ref{thm:main}]
Let $m\ge 3$ and $k\ge 1$ be integers (with $k$ even if we consider $k$-arrangements of pseudocircles). We show that $|\LL_{k,m}|\ge 2^k/4$ and $|\CC_{k,m}| \ge 2^k/(4k)$. From this it follows that both lower bounds in Theorem~\ref{thm:main} hold for any $a$ such that $1 < a < 2$.

As we illustrate on the left-hand side of Figure~\ref{fig:st} for the case $r=5$, for each $r\ge 3$ we let $S_r$ be a bounded section of the cyclic arrangement of size $r$. Thus $S_r$ consists of $r$ $x$-monotone arcs that cross each other exactly once. As we show on the right-hand side of Figure~\ref{fig:st}, we let $T_r$ be the section of arcs obtained by reflecting $S_r$ over a horizontal line. 

\begin{figure}[ht!]
\def\ta#1{{\Scale[1.4]{#1}}}
\centering
\scalebox{0.7}{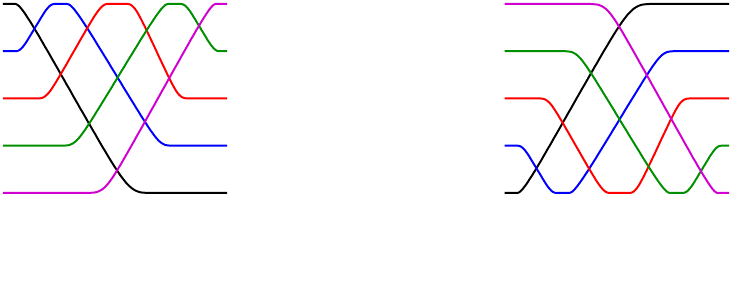}
\caption{$S_5$ is a bounded section of the cyclic arrangement of size $5$, and $T_5$ is obtained by reflecting $S_5$ over a horizontal line. The sections $S_r$ and $T_r$ are obtained similarly for each $r\ge 3$.}
\label{fig:st}
\end{figure}

As we illustrate in Figure~\ref{fig:ST}, we construct $k$-arrangements of $r$ pseudolines by gluing together $k$ copies of $S_r$ or $T_r$ and extending the result infinitely in both directions. In the terminology of Section~\ref{sec:concluding}, the objects obtained by gluing together copies of $S_r$ and $T_r$ are particularly simple instances of {\em regular path systems}~\cite{DHH}. 

\begin{figure}[ht!]
\def\ta#1{{\Scale[2.0]{#1}}}
\centering
\scalebox{0.5}{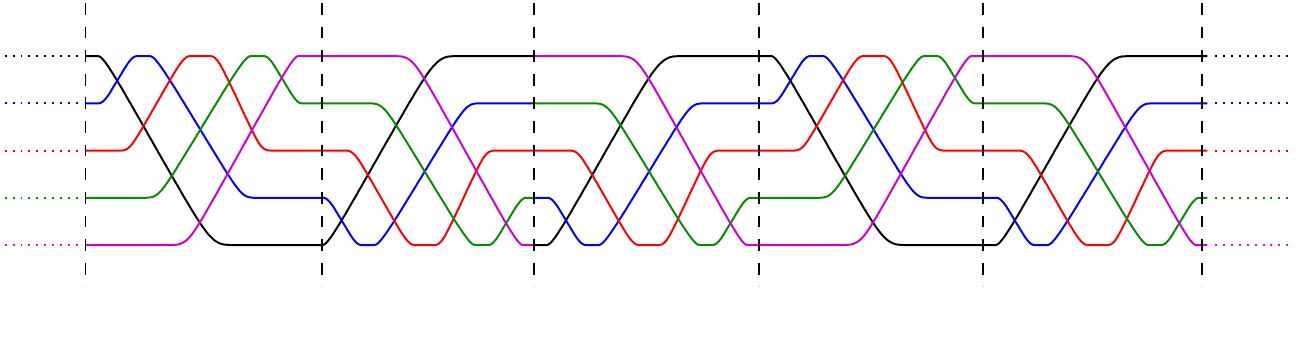}
\caption{This $5$-arrangement of $5$ pseudolines is obtained from the word $S_5 T_5 T_5 S_5 T_5$: we glue one copy of $S_5$, followed by two copies of $T_5$, then followed by one more copy of $S_5$ and finally by one copy of $T_5$, and extend the result infinitely in both directions.} 
\label{fig:ST}
\end{figure}

To formalize this construction let $W$ be a word of length $k$ on the alphabet $\{S_r,T_r\}$. By concatenating the corresponding $S_r$- and $T_r$-sections and extending to infinity in both directions we obtain a $k$-arrangement of pseudolines $A_r(W)$. For every $n\ge m$, every $m$-subarrangement of $A_n(W)$ is isomorphic to $A_m(W)$, and so it follows that $A_m(W)$ is packable.

To see how many elements of $\LL_{k,m}$ (that is, non-isomorphic packable $k$-arrangements of size $m$) we obtain in this way, fix $k$ and $m$. It is straightforward to see that if $W$ and $U$ are two words of length $k$ then the $k$-arrangements $A_m(W)$ and $A_m(U)$ are isomorphic if and only if $U\in \{W,\overline{W},$ $({W})^{-1},(\overline{W})^{-1}\}$ (we note that $(\overline{W})^{-1}=\overline{W^{-1}}$). Since there are $2^k$ words of length $k$ it follows that there are at least $2^k/4$ non-isomorphic packable $k$-arrangements of pseudolines of size $m$. That is, $|\LL_{k,m}|\ge 2^k/4$. This proves the lower bound in Theorem~\ref{thm:main} for $\LL_{k,m}$.

For $k$-arrangements of pseudocircles given a word of length $k$ on the alphabet $\{S_r, T_r\}$ we glue together the $S_r$- and $T_r$-sections but instead of extending the glued object to infinity in both directions we join the loose ends, as illustrated in Figure~\ref{fig:ST2}, in order to obtain a $k$-arrangement of $r$ pseudocircles $B_r(W)$. For every $n\ge m$, every $m$-subarrangement of $B_n(W)$ is isomorphic to $B_m(W)$, and so it follows that $B_m(W)$ is packable.

\begin{figure}[ht!]
\def\ta#1{{\Scale[2.2]{#1}}}
\centering
\scalebox{0.45}{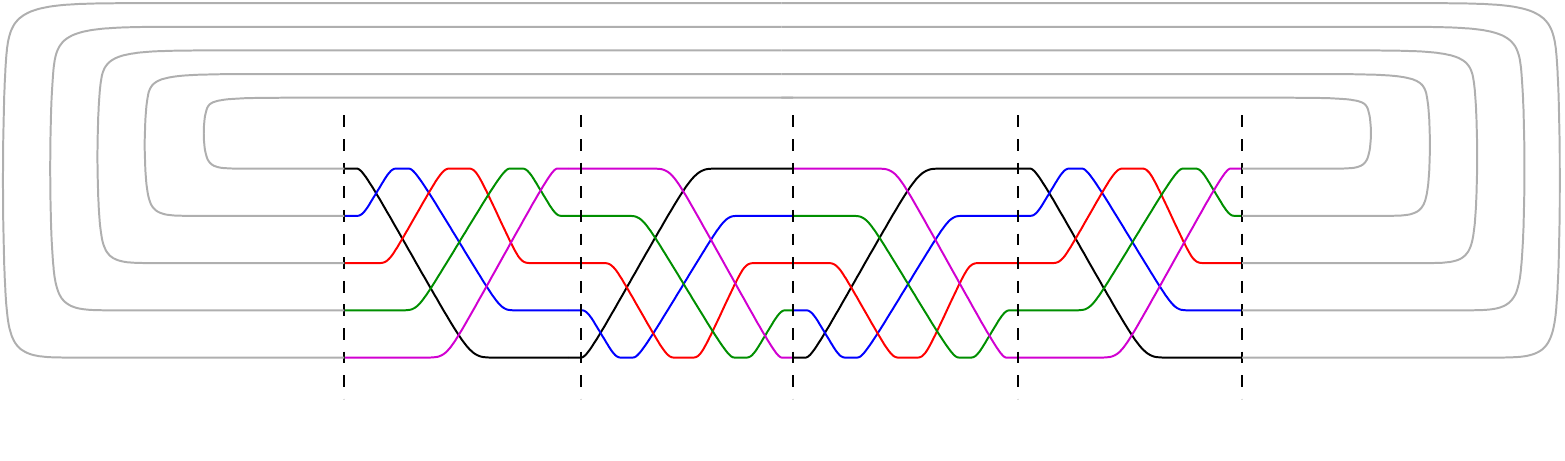}
\caption{This $4$-arrangement of $5$ pseudocircles is obtained from the word $S_5 T_5 T_5 S_5$: we glue one copy of $S_5$, followed by two copies of $T_5$, and finally followed by one copy of $S_5$, and join the loose ends with pairwise disjoint arcs.}
\label{fig:ST2}
\end{figure}

It is straightforward to see that if $W$ and $U$ are two words of length $k$ then the $k$-arrangements of pseudocircles $B_m(W)$ and $B_m(U)$ are isomorphic if and only if $U$ is obtained from $W$ by a sequence of complementary operations, reversal operations, and cyclic translations. It is easily checked that given a word $W$ of length $k$ one can obtain at most $4k$ different words by performing these operations, and from this it follows that there are at least $2^k/(4k)$ non-isomorphic packable $k$-arrangements of pseudocircles of size $m$. That is, $|\CC_{k,m}|\ge 2^k/(4k)$. This proves the lower bound in Theorem~\ref{thm:main} for $\CC_{k,m}$.
\end{proof}

\section{Codes of \texorpdfstring{$k$}{k}-arrangements of pseudocircles}\label{sec:codes}

To prove the upper bound in Theorem~\ref{thm:main} for $k$-arrangements of pseudocircles we turn the problem into a combinatorial one. We capture a $k$-arrangement in a combinatorial {\em code} that retains the order and identity of the crossings along each pseudocircle. As we shall see, the key fact is that a code of size at least $3$ is determined by its subcodes of size $3$. In Section~\ref{sec:upperbound}, a single application of Ramsey's theorem will reduce the counting problem for packable arrangements of any size to the counting of codes of size $3$.

\begin{remark} 
Throughout this section we only work with $k$-arrangements of pseudocircles, and for brevity we refer to them simply as $k$-arrangements. 
\end{remark} 

Our encoding is an extension of the one used by Ortner~\cite{ortner} for $2$-arrangements. An alternative way to encode $k$-arrangements is described in the related work by Linhart and Ortner~\cite{linort1}. For a positive integer $r$, we write $[r]=\{1,\ldots,r\}$.

\subsection{Codes of \texorpdfstring{$k$}{k}-arrangements}

Let $B$ be a $k$-arrangement of size $n\ge2$. Label its pseudocircles $1,\ldots,n$, and choose a noncrossing point $p_i$ (a {\em startpoint}) on each pseudocircle $i$. The encoding procedure is illustrated in Figure~\ref{fig:codespsc}. 

We traverse each pseudocircle counterclockwise from its startpoint. Suppose that a crossing $v$ of pseudocircles $i$ and $j$ is the $s$th crossing with $i$ encountered in the traversal of $j$. In the traversal of $i$, record $v$ as $j_s^+$ if we enter the closed disk bounded by $j$, and as $j_s^-$ if we leave this disk. The resulting sequence $\sigma(i)$ is the {\em key} of pseudocircle $i$, and the ordered tuple $\Sigma=(\sigma(1),\ldots,\sigma(n))$ is a {\em code} of $B$. As with $B$, we say that the code $\Sigma$ has {\em size} $n$.

\def\pseone{{ {\Scale[2.4]{1}}}}
\def\psetwo{{ \blueit{{\Scale[2.4]{2}}}}}
\def\psethree{{ \redit{{\Scale[2.4]{3}}}}}

\def\pseoneb{{ {\Scale[2.4]{\mathbf 1}}}}
\def\psetwob{{ \blueit{{\Scale[2.4]{\mathbf 2}}}}}
\def\psethreeb{{ \redit{{\Scale[2.4]{\mathbf 3}}}}}

\def\fxone{{ {\Scale[2.4]{p}}_{\Scale[1.8]{1}} }}
\def\fxtwo{{ {\Scale[2.4]{p}}_{\Scale[1.8]{2}} }}
\def\fxthree{{ {\Scale[2.4]{p}}_{\Scale[1.8]{3}} }}
\def\xone{ {\Scale[2.6]{3}}^{\Scale[2]{+}}_{\Scale[2]{3}} }
\def\xtwo{ {\Scale[2.6]{3}}^{\Scale[2]{-}}_{\Scale[2]{2}}}
\def\xthree{ {\Scale[2.6]{3}}^{\Scale[2]{+}}_{\Scale[2]{1}}}
\def\xfour{ {\Scale[2.6]{3}}^{\Scale[2]{-}}_{\Scale[2]{4}}}
\def\xfive{ {\Scale[2.6]{2}}^{\Scale[2]{+}}_{\Scale[2]{3}}}
\def\xsix{ {{\Scale[2.6]{2}}^{\Scale[2]{-}}_{\Scale[2]{4}}}}
\def\xseven{ {{\Scale[2.6]{2}}^{\Scale[2]{+}}_{\Scale[2]{1}}}}
\def\xeight{ {\Scale[2.6]{2}}^{\Scale[2]{-}}_{\Scale[2]{2}}}

\def\xnine{ {\Scale[2.6]{1}}^{\Scale[2]{+}}_{\Scale[2]{1}} }
\def\xten{ {\Scale[2.6]{3}}^{\Scale[2]{+}}_{\Scale[2]{4}}}
\def\xeleven{ {\Scale[2.6]{3}}^{\Scale[2]{-}}_{\Scale[2]{1}}}
\def\xtwelve{ {\Scale[2.6]{1}}^{\Scale[2]{-}}_{\Scale[2]{2}}}
\def\xthirteen{ {\Scale[2.6]{1}}^{\Scale[2]{+}}_{\Scale[2]{3}}}
\def\xfourteen{ {{\Scale[2.6]{3}}^{\Scale[2]{+}}_{\Scale[2]{2}}}}
\def\xfifteen{ {{\Scale[2.6]{3}}^{\Scale[2]{-}}_{\Scale[2]{3}}}}
\def\xsixteen{ {\Scale[2.6]{1}}^{\Scale[2]{-}}_{\Scale[2]{4}}}

\def\zone{ {\Scale[2.6]{1}}^{\Scale[2]{-}}_{\Scale[2]{1}} }
\def\ztwo{ {\Scale[2.6]{1}}^{\Scale[2]{+}}_{\Scale[2]{4}}}
\def\zthree{ {\Scale[2.6]{1}}^{\Scale[2]{-}}_{\Scale[2]{3}}}
\def\zfour{ {\Scale[2.6]{1}}^{\Scale[2]{+}}_{\Scale[2]{2}}}
\def\zfive{ {\Scale[2.6]{2}}^{\Scale[2]{+}}_{\Scale[2]{3}}}
\def\zsix{ {{\Scale[2.6]{2}}^{\Scale[2]{-}}_{\Scale[2]{4}}}}
\def\zseven{ {{\Scale[2.6]{2}}^{\Scale[2]{+}}_{\Scale[2]{1}}}}
\def\zeight{ {\Scale[2.6]{2}}^{\Scale[2]{-}}_{\Scale[2]{2}}}

\def\ga{{\Scale[2.6]{\text{\rm (a)}}}}
\def\gb{{\Scale[2.6]{\text{\rm (b)}}}}
\def\gc{{\Scale[2.6]{\text{\rm (c)}}}}
\def\gd{{\Scale[2.6]{\text{\rm (d)}}}}
\begin{figure}[tbp]
\centering
\resizebox{\textwidth}{!}{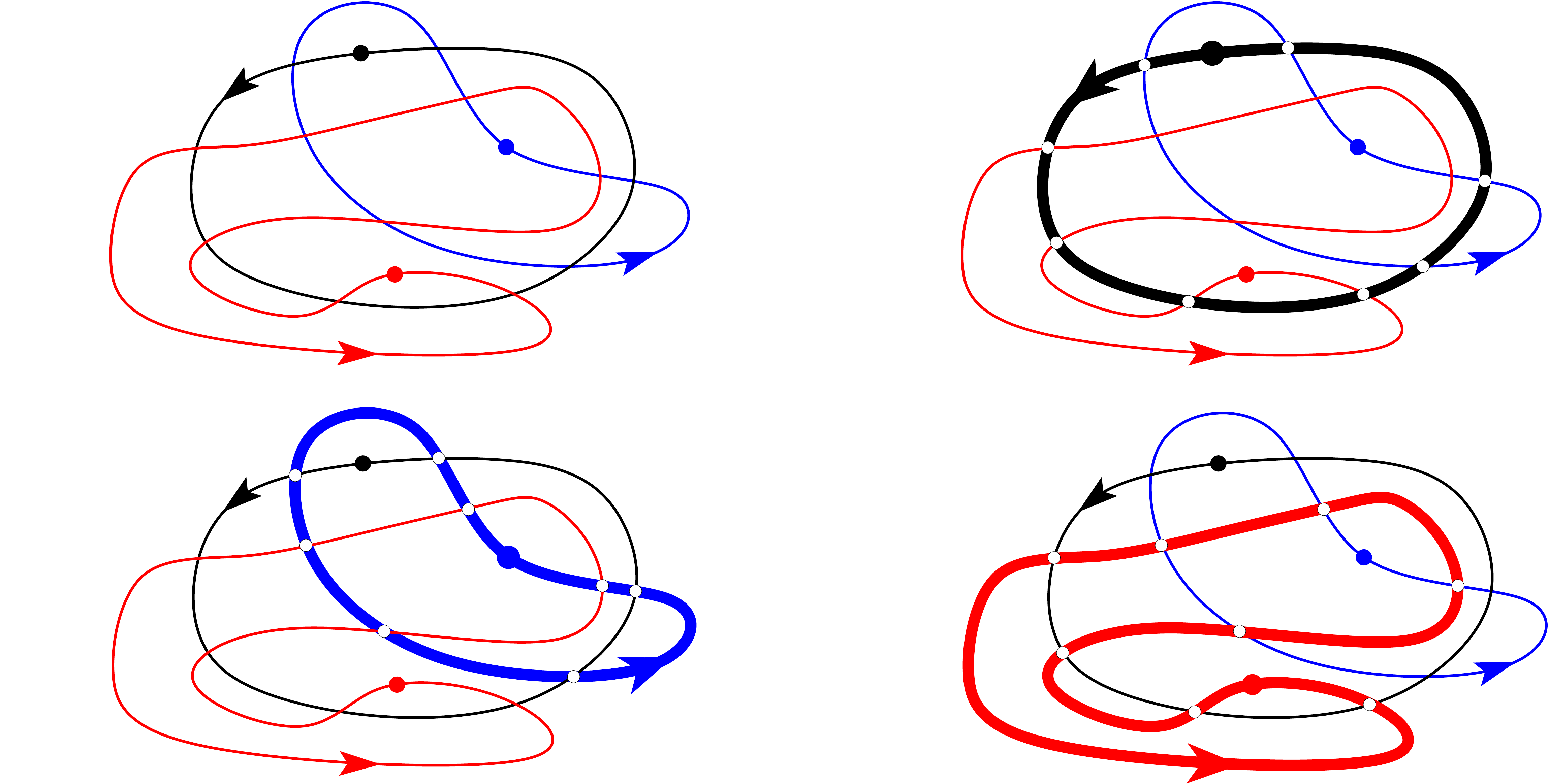}
\caption{A $4$-arrangement $B$ of three pseudocircles, with labels and startpoints as in (a). Panel (b) gives the key $\sigma(1)=\kode{2}{2}{-} \kode{3}{3}{+} \kode{3}{2}{-} \kode{3}{1}{+} \kode{3}{4}{-} \kode{2}{3}{+} \kode{2}{4}{-} \kode{2}{1}{+}$; panel (c) gives $\sigma(2)=\kode{3}{3}{-} \kode{1}{4}{-} \kode{1}{1}{+} \kode{3}{4}{+} \kode{3}{1}{-} \kode{1}{2}{-} \kode{1}{3}{+} \kode{3}{2}{+}$; and panel (d) gives $\sigma(3)=\kode{1}{3}{-} \kode{1}{2}{+} \kode{2}{3}{+} \kode{2}{4}{-} \kode{2}{1}{+} \kode{2}{2}{-} \kode{1}{1}{-} \kode{1}{4}{+}$. Thus $\Sigma=(\sigma(1),\sigma(2),\sigma(3))$ is a code of $B$.}
\label{fig:codespsc}
\end{figure}

Clearly each key $\sigma(i)$ has $(n-1)k$ symbols. For each $j\in[n]\setminus\{i\}$ and $s\in[k]$, exactly one of $j_s^+$ and $j_s^-$ occurs in $\sigma(i)$, and so the symbols in a key are all distinct. We denote by $\PPi_{k,n}$ the collection of all codes of $k$-arrangements of $n$ pseudocircles. We note that evidently $\PPi_{k,n}$ is finite.


We emphasize that a $k$-arrangement $B$ need not have a unique code: in general, different labellings of the pseudocircles, different choices of startpoints, and different choices of the outer face yield different codes. 

If we regard a $k$-arrangement $B$ as a $4$-regular graph $G_B$ embedded in the sphere then it is not difficult to see that the code of $B$ determines $G_B$ and its rotation system (see the standard reference by Mohar and Thomassen~\cite{MoharThomassen} for a review on rotation systems). Since $G_B$ is connected, its rotation system determines its embedding in the sphere up to homeomorphism. Since this is precisely the notion of isomorphism that we use for pseudocircle arrangements, this implies the following.

\begin{observation}\label{obs:3circ} Let $A$ and $B$ be $k$-arrangements of pseudocircles of the same size. If there is a $\Sigma$ that is a code of both $A$ and $B$, then $A$ and $B$ are isomorphic.
\end{observation}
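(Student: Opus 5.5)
The plan is to rebuild the arrangement, up to isomorphism, from the code alone. We regard a $k$-arrangement $B$ of $n$ pseudocircles as a $4$-regular plane graph $G_B$. Its vertices are the crossings and its edges are the arcs of the pseudocircles between consecutive crossings. We then show that $\Sigma$ determines the abstract graph $G_B$ together with its rotation system, up to the choice of orientation.

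\emph{Vertices and edges.} Each crossing $v$ of pseudocircles $i$ and $j$ appears exactly twice in $\Sigma$: once in $\sigma(i)$, as a symbol $j_s^{\pm}$, and once in $\sigma(j)$, as a symbol $i_t^{\pm}$. Here $s$ is the position of $v$ among the crossings with $i$ along the traversal of $j$, and $t$ is its position among the crossings with $j$ along the traversal of $i$. Thus $t$ is read off from $\sigma(i)$ by counting the occurrences of symbols with base $j$ up to that point, and similarly $s$ is read off from $\sigma(j)$. We identify the vertex with the pair of occurrences matched in this way, which is a bijection computable from $\Sigma$. The edges are the consecutive pairs in each cyclic key $\sigma(i)$. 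Since every key contains at least one symbol, this recovers $G_B$, including its loops and multiple edges.

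\emph{Rotation system.} At a crossing $v$ of $i$ and $j$ the four half-edges, in cyclic order, alternate between $i$ and $j$. Two pieces of data fix this order: the directions of traversal, which are counterclockwise on both curves, and the sign $\pm$, which says whether $i$ enters or leaves the disk of $j$. Since both curves are oriented counterclockwise, if $i$ enters the disk of $j$ at $v$, then $j$ crosses $i$ from left to right, or in the opposite sense, in a fixed way. This fixes the local rotation at $v$ up to a global reflection, which is consistent for every $v$. So the rotation at each vertex is determined by $\Sigma$. I would check the sign convention with a small local picture.

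Since $G_B$ is connected (the pseudocircles pairwise cross), its rotation system determines the cellular embedding in the sphere up to homeomorphism. This is the standard fact from Mohar--Thomassen. If $\Sigma$ is a code of both $A$ and $B$, then $G_A\cong G_B$ with the same rotation system, and hence with homeomorphic cell decompositions of the sphere. By the adopted notion of isomorphism, $A$ and $B$ are isomorphic.

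The main obstacle is the rotation step: verifying carefully that the $\pm$ sign together with the counterclockwise orientations really pins down the cyclic order at each crossing, and that the resulting orientation is globally consistent rather than locally ambiguous. I would handle this by fixing the orientation of the sphere and checking the two possible sign cases directly.
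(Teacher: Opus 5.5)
Your proposal is correct and follows the paper's route. You recover the $4$-regular graph $G_B$ and its rotation system from the code, then use connectivity so that the rotation system fixes the spherical embedding up to homeomorphism. You also supply the details the paper calls ``not difficult to see'', and the sign check you defer works out: with counterclockwise traversal the disk of $j$ lies to the left of $j$, so $j_s^+$ in $\sigma(i)$ means $i$ crosses $j$ from right to left, which fixes the cyclic order of the four darts at that crossing relative to the fixed orientation of the plane.
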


\subsection{Subcodes}\label{sub:subcodes}

Let $\Sigma=(\sigma(1),\ldots,\sigma(n))$ be a code of an arrangement $B$, and let $S=\{i_1,\ldots,i_m\}\subseteq[n]$, where $i_1<\cdots<i_m$ and $m\ge2$. The {\em subcode} $\Sigma[i_1,\ldots,i_m]$, also denoted by $\Sigma[S]$, is obtained by retaining the keys indexed by $S$, deleting from these keys all symbols whose curve labels are outside $S$, and relabelling $i_j\mapsto j$ everywhere. See Figure~\ref{fig:coderest} for an illustration.

Thus $\Sigma[i_1,\ldots,i_m]=\Sigma[S]$ is the code of the corresponding subarrangement with the same startpoints and the inherited order of the curve labels. The signs and indices $s$ are unchanged. Indeed, each index records an order among crossings of one fixed pair of curves, and deleting other curves does not change that order. 

We note that deleting some pseudocircles and then deleting some more gives exactly the same code as deleting all of them at once, after the natural relabelling of the remaining pseudocircles. This implies that taking subcodes in stages has the same effect as taking the final subcode in one step from the original code.

\def\fxone{{ {\Scale[2.4]{p}}_{\Scale[1.8]{1}} }}
\def\fxtwo{{ {\Scale[2.4]{p}}_{\Scale[1.8]{2}} }}
\def\xone{ {\Scale[2.6]{2}}^{\Scale[2]{+}}_{\Scale[2]{3}} }
\def\xtwo{ {\Scale[2.6]{2}}^{\Scale[2]{-}}_{\Scale[2]{2}}}
\def\xthree{ {\Scale[2.6]{2}}^{\Scale[2]{+}}_{\Scale[2]{1}}}
\def\xfour{ {\Scale[2.6]{2}}^{\Scale[2]{-}}_{\Scale[2]{4}}}

\def\zone{ {\Scale[2.6]{1}}^{\Scale[2]{-}}_{\Scale[2]{1}} }
\def\ztwo{ {\Scale[2.6]{1}}^{\Scale[2]{+}}_{\Scale[2]{4}}}
\def\zthree{ {\Scale[2.6]{1}}^{\Scale[2]{-}}_{\Scale[2]{3}}}
\def\zfour{ {\Scale[2.6]{1}}^{\Scale[2]{+}}_{\Scale[2]{2}}}

\def\pseone{{ {\Scale[2.4]{1}}}}
\def\psetwo{{ \redit{{\Scale[2.4]{2}}}}}

\def\pseoneb{{ {\Scale[2.4]{\mathbf 1}}}}
\def\psetwob{{ \redit{{\Scale[2.4]{\mathbf 2}}}}}

\def\ga{{\Scale[2.6]{\text{\rm (a)}}}}
\def\gb{{\Scale[2.6]{\text{\rm (b)}}}}
\def\gc{{\Scale[2.6]{\text{\rm (c)}}}}
\begin{figure}[tbp]
\centering
\resizebox{\textwidth}{!}{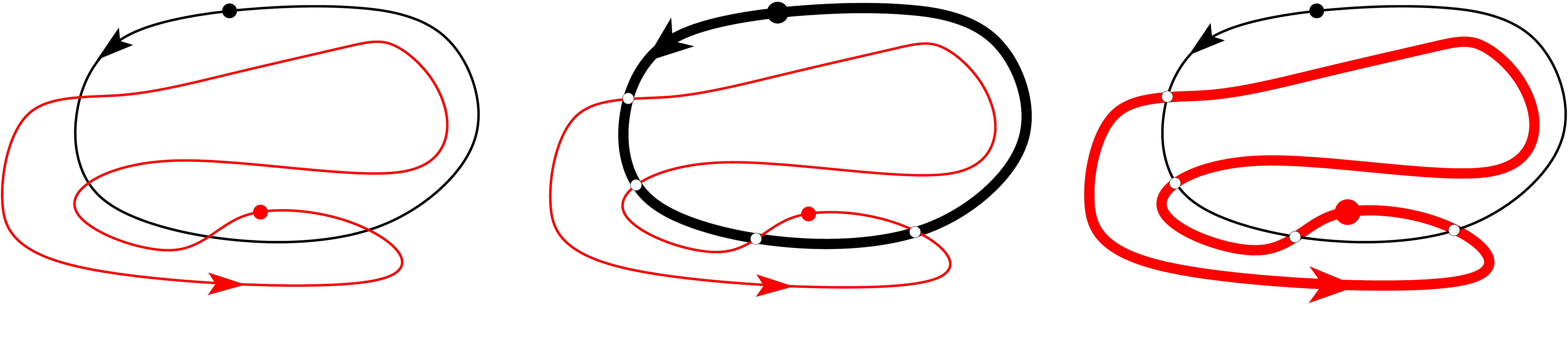}
\caption{Restricting the arrangement $B$ in Figure~\ref{fig:codespsc} to pseudocircles $1$ and $3$, and relabelling $3\mapsto2$, gives the arrangement $A$ in (a). Keeping the same startpoints, its keys are read in (b) and (c). Hence $\Sigma[1,3]=\bigl(\kode{2}{3}{+} \kode{2}{2}{-} \kode{2}{1}{+} \kode{2}{4}{-}\,,\,\kode{1}{3}{-} \kode{1}{2}{+} \kode{1}{1}{-} \kode{1}{4}{+}\bigr)$.}
\label{fig:coderest}
\end{figure}

\subsection{Reconstruction from triple subcodes}

We now show a key property of the subcodes of a code. Loosely speaking, we prove that the $3$-subcodes of a code contain enough information to determine the whole code.

\begin{lemma}\label{lem:triplesdo}
Let $m\ge3$, and let $\Sigma,\Phi\in\PPi_{k,m}$. If $\Sigma[S]=\Phi[S]$ for every $3$-element subset $S$ of $[m]$, then $\Sigma=\Phi$.
\end{lemma}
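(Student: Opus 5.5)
The plan is a direct combinatorial argument on keys: a linear sequence of pairwise distinct symbols is determined by which symbols it contains and by the relative order of every pair of them. Every such piece of information about $\Sigma$ involves at most three curve labels, so it can be read off from a suitable $3$-subcode. Geometry plays no role here. We only use the definition of subcode from Section~\ref{sub:subcodes}, namely deletion of symbols with labels outside $S$ followed by the order-preserving relabelling $i_j\mapsto j$.

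Fix $i\in[m]$. We show that $\sigma(i)$, the $i$th key of $\Sigma$, equals $\varphi(i)$, the $i$th key of $\Phi$. For a $3$-set $S=\{i_1<i_2<i_3\}$ containing $i$, let $\rho_S$ be the relabelling $i_j\mapsto j$. This map is a bijection from $S$ to $[3]$ that depends only on $S$, so it can be inverted. Hence from $\Sigma[S]$ we recover exactly the subsequence of $\sigma(i)$ consisting of the symbols whose curve label lies in $S$, with the original labels, signs and indices. The same holds for $\Phi$. Because $\Sigma[S]=\Phi[S]$, these subsequences of $\sigma(i)$ and $\varphi(i)$ coincide.

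First, the symbol sets agree. For each $j\ne i$, the symbols of $\sigma(i)$ with label $j$ (exactly one of $j_s^+,j_s^-$ for each $s\in[k]$) appear in the subsequence for any $3$-set $S\supseteq\{i,j\}$. Such a set exists because $m\ge3$. Hence $\sigma(i)$ and $\varphi(i)$ contain the same symbols, and in both all symbols are distinct.

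Second, the pairwise orders agree. Take two distinct symbols $x,y$ of $\sigma(i)$, with curve labels $j$ and $j'$ (possibly $j=j'$). Choose a $3$-set $S\supseteq\{i,j,j'\}$, again possible since $m\ge3$. Both $x$ and $y$ survive in the subsequence determined by $S$, so their relative order is the same in $\sigma(i)$ and $\varphi(i)$.

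Finally, two linear orderings of the same finite set that agree on every pair are identical. Keys are linear, not cyclic, sequences, since the startpoints fix the first symbol. Hence $\sigma(i)=\varphi(i)$ for every $i$, and so $\Sigma=\Phi$.

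The only point needing care is the bookkeeping around the relabelling in subcodes. We must check that deleting labels does not alter signs or indices, which Section~\ref{sub:subcodes} records explicitly, and that $\rho_S$ is invertible given $S$. Neither is a real obstacle.
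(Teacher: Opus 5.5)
Your proof is correct and follows essentially the same route as the paper. Both arguments show that $\sigma(i)$ and $\varphi(i)$ contain the same symbols and agree on the relative order of every pair of symbols, read off from a $3$-subcode containing $i$ and the relevant curve labels; the only cosmetic difference is that the paper first passes to $2$-subcodes for pairs of symbols with the same curve label, whereas you choose a $3$-set containing $\{i,j\}$ directly.
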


\begin{proof}
Write $\Sigma=(\sigma(1),\ldots,\sigma(m))$ and $\Phi=(\varphi(1),\ldots,\varphi(m))$. Every $2$-subcode is obtained by restricting any $3$-subcode containing the same pair, so corresponding $2$-subcodes are equal. Thus $\sigma(i)$ and $\varphi(i)$ contain the same symbols, with the same relative order whenever the symbols involve the same other curve.

Now consider two symbols $j_s^\delta$ and $\ell_t^\epsilon$ in $\sigma(i)$ with $j\ne\ell$. Their relative order is recorded in $\Sigma[\{i,j,\ell\}]$. Equality with $\Phi[\{i,j,\ell\}]$, with the same increasing relabelling on both sides, shows that their relative order in $\varphi(i)$ is the same. Thus every pair of symbols has the same relative order in the two keys. Since their symbols are distinct, $\sigma(i)=\varphi(i)$. This holds for every $i$, and hence $\Sigma=\Phi$.
\end{proof}

In particular, for a fixed $\Pi\in\PPi_{k,3}$ and a fixed $m\ge3$, there is at most one code of size $m$ all of whose $3$-subcodes equal $\Pi$.

\section[Proof of the upper bounds]{Proof of the upper bounds in Theorem~\ref{thm:main}}\label{sec:upperbound}

We prove the upper bounds separately, first for pseudocircles and then for pseudolines. The two proofs have the same basic form. We first show that in each case the number of codes of size $3$ is at most exponential in $k$. We then apply Ramsey's theorem to the $3$-subcodes of any sufficiently large packed arrangement. For pseudocircles, Lemma~\ref{lem:triplesdo} from Section~\ref{sec:codes} will then imply that the resulting common $3$-subcode determines the whole code. The pseudoline case is handled analogously.


Before moving on to the proofs we briefly review the notion of a meander, a classical object that plays a central role in some of our upcoming arguments.

\subsection{Meanders and $k$-arrangements of size two}

As we illustrate in Figure~\ref{fig:openmea}, an {\em open meander of order $k$} consists of a horizontal line together with a pseudoline that crosses it exactly $k$ times.

\begin{figure}[ht!]
\def\tf#1{{\Scale[2.8]{#1}}}
\centering
\scalebox{0.38}{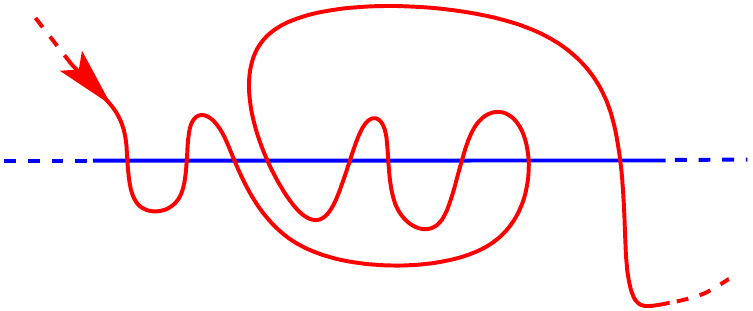}
\caption{An open meander of order $9$.}
\label{fig:openmea}
\end{figure}

As we illustrate in Figure~\ref{fig:closmea}, a {\em closed meander of order $k$} consists of a horizontal line together with a simple closed curve that crosses it exactly $2k$ times. 

\begin{figure}[ht!]
\def\tf#1{{\Scale[2.8]{#1}}}
\centering
\scalebox{0.38}{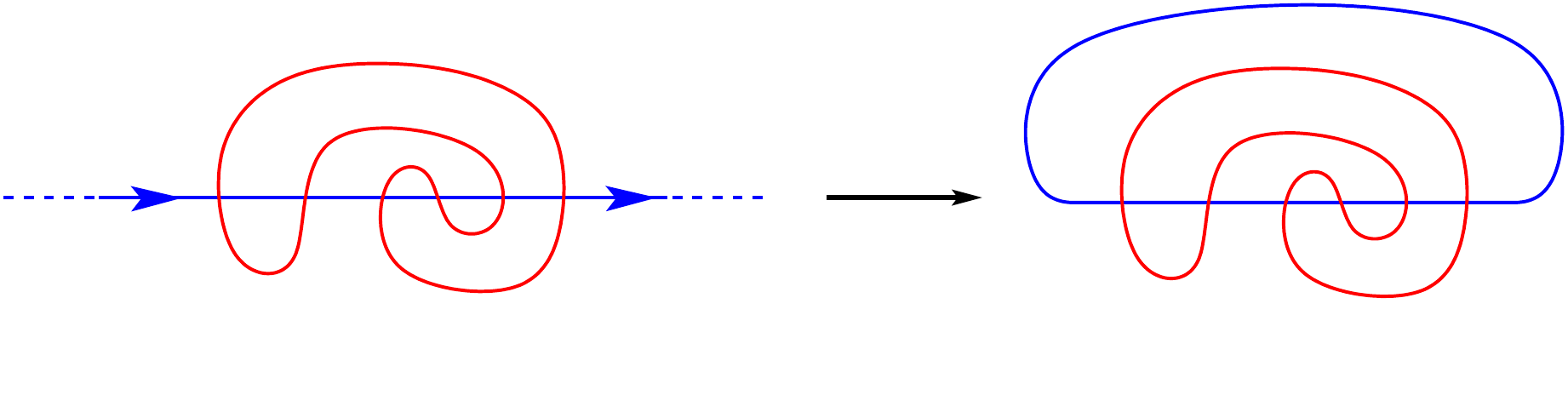}
\caption{In (a) we have a closed meander of order $3$. As we show in (b), this closed meander is naturally turned into a $6$-arrangement of two pseudocircles.}
\label{fig:closmea}
\end{figure}

Two meanders of the same type (both open or both closed) are {\em equivalent} if there is an orientation-preserving self-homeomorphism of the plane that takes one to the other and takes the horizontal line to itself, preserving its direction.

We use meanders to find upper bounds on the number of isomorphism classes of arrangements of size two.

For each positive integer $k$ let $L(k,2)$ denote the set of isomorphism classes of $k$-arrangements of two pseudolines, and for each even $k$ let $C(k,2)$ denote the set of isomorphism classes of $k$-arrangements of two pseudocircles.

\begin{lemma}\label{lem:meanbound}
There is an absolute constant $D>1$ such that $|L(k,2)|<D^k$ for every integer $k \ge 1$ and $|C(k,2)|<D^k$ for every even integer $k \ge 2$.
\end{lemma}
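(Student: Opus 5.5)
Both bounds reduce to counting meanders, which in turn reduces to counting noncrossing matchings; the counts come out as Catalan numbers.

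\textbf{Step 1: reduce two pseudolines to an open meander.} Let $\{\ell_1,\ell_2\}$ be a $k$-arrangement of two pseudolines. By the Schoenflies theorem, some self-homeomorphism of the plane straightens $\ell_1$ into the horizontal line. The image of $\ell_2$ then crosses the horizontal line exactly $k$ times, so the pair becomes an open meander of order $k$. Hence $|L(k,2)|$ is at most the number of open meanders of order $k$ up to equivalence; the factor coming from reflections and relabellings only helps.

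\textbf{Step 2: reduce two pseudocircles to a closed meander.} Let $\{c_1,c_2\}$ be a $k$-arrangement of two pseudocircles with $k$ even. Isomorphism is defined on the sphere, so pick a point of $c_1$ away from $c_2$ and send it to infinity. Then $c_1$ becomes a line and $c_2$ a simple closed curve crossing it $k=2(k/2)$ times, i.e., a closed meander of order $k/2$. This is the passage of Figure~\ref{fig:closmea}(b) read backwards. Hence $|C(k,2)|$ is at most the number of closed meanders of order $k/2$.

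\textbf{Step 3: bound the number of meanders.} Start with a closed meander of order $n$. Each arc of the closed curve between consecutive crossings lies entirely in the upper or the lower open half-plane. The upper arcs form a noncrossing perfect matching of the $2n$ crossing points, ordered along the line, and so do the lower arcs.

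The meander is determined up to equivalence by this pair of matchings. Indeed, the two matchings determine the planar embedding of the line together with the curve: each half-plane is a disk, and a noncrossing system of chords in a disk is unique up to homeomorphism fixing the boundary.

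So there are at most $C_n^2\le 16^n$ closed meanders of order $n$, where $C_n$ is the $n$th Catalan number.

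For open meanders of order $k$, the two infinite ends of the pseudoline lie in half-planes. Adding a point at infinity, or equivalently closing the pseudoline up with one extra crossing far to the right, turns an open meander of order $k$ into a closed meander of order about $(k+1)/2$. More directly: the upper arcs together with the at most two upward infinite rays form a noncrossing structure on $k$ points, and likewise for the lower side. Either way one gets at most $C_{k+1}^2$ possibilities, which is at most $16^{k+1}$.

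Combining Steps 1–3 gives $|L(k,2)|\le 16^{k+1}$ and $|C(k,2)|\le 16^{k/2}$. Taking $D=2^{9}$, say, gives strict inequalities $<D^k$ for all $k\ge1$.

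\textbf{Main obstacle.} The delicate point is the rigidity claim in Step 3: that the pair of matchings, together with the ray data in the open case, determines the configuration up to homeomorphism. I would justify it through the Jordan–Schoenflies theorem applied face by face. Within each half-plane, the arcs cut it into faces that are disks, and homeomorphisms can be extended across these faces. Alternatively, this is the same argument as Observation~\ref{obs:3circ}: the combinatorics fix the rotation system of a connected plane graph, and the rotation system determines the embedding.
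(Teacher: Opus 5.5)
Your Steps 1 and 2 are the paper's reduction: straighten one curve (after sending a noncrossing point of $c_1$ to infinity in the pseudocircle case). This gives an open meander of order $k$, respectively a closed meander of order $k/2$. The difference is in Step 3. The paper does not count meanders itself; it cites La~Croix's survey and Albert and Paterson for their exponential growth. You instead prove an explicit bound by encoding a closed meander by its pair of noncrossing matchings (upper and lower). This gives at most $C_n^2\le 16^n$ closed meanders of order $n$, i.e.\ at most $4$ per crossing, and you justify rigidity by Schoenflies on each closed half-disk. This is essentially the elementary Albert--Paterson encoding by words over a four-letter alphabet that the paper mentions right after its proof (a pair of Dyck words is such a word). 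So your route gives a self-contained argument with an explicit constant such as $D=2^9$. The paper's citation gives brevity and the best known growth rates.

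One detail in your open case should be tightened. Closing up with a single extra crossing far to the right works only for odd $k$, when the two rays lie on opposite sides. For even $k$ both rays lie on the same side, so the point $\infty$ on the boundary of that half-disk receives two rays. Splitting $\infty$ into two adjacent boundary points turns that side into a noncrossing perfect matching on $k+2$ points. This gives at most $2C_{k/2}C_{k/2+1}$ configurations for even $k$ and $C_{(k+1)/2}^2$ for odd $k$, both well below $C_{k+1}^2$. With that fix your final bounds $|L(k,2)|\le 16^{k+1}<512^k$ and $|C(k,2)|\le 4^k$ stand.
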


\begin{proof}
Straightening one of the two pseudolines of a $k$-arrangement into a horizontal line turns it into an open meander of order $k$, and equivalent meanders yield isomorphic arrangements; thus $|L(k,2)|$ is at most the number of open meanders of order $k$. Similarly, cutting one of the two pseudocircles of a $k$-arrangement at a noncrossing point and opening it into a horizontal line shows that $|C(k,2)|$ is at most the number of closed meanders of order $k/2$. 

Both the number of open meanders of order $r$ and the number of closed meanders of order $r$ grow at most exponentially with $r$. See the survey by La~Croix~\cite{lacroix}, and in particular the work by Albert and Paterson~\cite{albert} for the closed case. Thus the lemma follows.
\end{proof}

Albert and Paterson~\cite{albert} show that the exponential growth rate of closed meanders (counted by half their number of crossings) lies between $11.380$ and $12.901$. Thus the exponential growth rate per crossing lies between $\sqrt{11.380}\approx 3.3734$ and $\sqrt{12.901}\approx 3.5918$. For a simple exponential upper bound on the number of meanders, one may consider the method of Albert and Paterson~\cite[Section 2]{albert} that assigns to each closed meander with $k$ crossings a distinct word of length $k$ over a four-letter alphabet, and from this it follows that there are at most $4^k$ such meanders.

We will show that both upper bounds in Theorem~\ref{thm:main} hold with any fixed $b>64D^3$, where $D$ is as in Lemma~\ref{lem:meanbound}.


\subsection{Proof of the upper bound in Theorem~\ref{thm:main} for $\CC_{k,m}$}\label{sec:upcirc}

Recall that $\PPi_{k,n}$ is the collection of all codes of $k$-arrangements of $n$ pseudocircles. We start by establishing the following upper bound for $|\PPi_{k,3}|$.

\begin{lemma}\label{lem:third}
$|\PPi_{k,3}|<4096k^6(64D^3)^k$ holds for every even integer $k\ge 2$, where $D$ is as in Lemma~\ref{lem:meanbound}.
\end{lemma}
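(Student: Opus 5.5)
The plan is to show that a code $\Sigma=(\sigma(1),\sigma(2),\sigma(3))\in\PPi_{k,3}$ is determined by two kinds of data. The first is its three $2$-subcodes $\Sigma[1,2]$, $\Sigma[1,3]$, $\Sigma[2,3]$. The second is, for each $i\in[3]$, the way the two subsequences of $\sigma(i)$ coming from the two other curves are interleaved. Bounding the number of choices for each kind of data then gives the stated estimate.

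\emph{Step 1 (determination).} This is the same reasoning as in the proof of Lemma~\ref{lem:triplesdo}. Fix $i$, and let $j<\ell$ be the two other labels. The key $\sigma(i)$ consists of the $k$ symbols with curve label $j$ and the $k$ symbols with curve label $\ell$. The symbols with label $j$ appear in $\sigma(i)$ in the same order as in the key of $i$ within $\Sigma[\{i,j\}]$, after undoing the relabelling. The same holds for the symbols with label $\ell$ and $\Sigma[\{i,\ell\}]$. So $\sigma(i)$ is recovered once we know these two $2$-subcodes and a word in $\{j,\ell\}^{2k}$ with exactly $k$ letters of each kind, which records which of the two sequences supplies the next symbol. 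Hence
\[
|\PPi_{k,3}|\le |\PPi_{k,2}|^3\binom{2k}{k}^3 .
\]

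\emph{Step 2 (interleavings).} We have $\binom{2k}{k}<4^k$, with strict inequality for $k\ge1$. So the interleavings contribute less than $64^k$ in total.

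\emph{Step 3 (codes of size two).} The aim is $|\PPi_{k,2}|\le 16k^2D^k$. A code of a $2$-arrangement $B$ is produced from the isomorphism class of $B$, of which there are fewer than $D^k$ by Lemma~\ref{lem:meanbound}, together with a bounded list of choices.
\begin{itemize}
\item The labelling of the two pseudocircles: $2$ choices.
\item The orientation of the sphere, since isomorphism allows reflections while codes read counterclockwise: $2$ choices.
\item The outer face, which only matters for each curve through which of its two sides is the ``disk'' used to decide the signs $\pm$: at most $4$ choices.
\item The startpoints: the code only depends on which of the $k$ arcs of each curve, cut out by the other curve, contains its startpoint, so there are $k^2$ choices.
\end{itemize}
This gives at most $2\cdot2\cdot4\cdot k^2 D^k=16k^2D^k$ codes.

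\emph{Step 4 (combining).} Putting the steps together,
\[
|\PPi_{k,3}|<(16k^2D^k)^3\,64^k=4096k^6(64D^3)^k .
\]

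The main obstacle is Step 3. There one must argue carefully that a code of a $2$-arrangement depends on the outer face only through the two sign conventions. The cleanest route is to note that reversing which side of curve $j$ counts as its disk simply flips every sign $j^{\pm}_s$, and does nothing else. Once this is established, the rest is bookkeeping, and the constants are generous enough to absorb any slack.
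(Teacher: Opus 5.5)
Your proposal is correct and is essentially the paper's own proof. It uses the same bound $|\PPi_{k,2}|\le 16k^2D^k$ from labelling, sphere orientation, bounded sides and startpoint arcs, the same merging bound $|\PPi_{k,3}|\le|\PPi_{k,2}|^3\binom{2k}{k}^3$, and the same arithmetic. One correction to your closing remark: making the other side of curve $j$ the bounded side does more than flip the signs of the $j$-symbols, since with the orientation fixed it also reverses the counterclockwise direction along $j$, which changes $\sigma(j)$ and renumbers the subscripts of the $j$-symbols. The code is still a function of the choices you list, so the count of $4$ and the final bound are unaffected.
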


\begin{proof}
We first bound the number $|\PPi_{k,2}|$ of codes of size $2$. Consider one isomorphism class. To obtain a code we must first specify, for each of the two pseudocircles, which of its two sides will be the bounded side in the plane. There are at most $4$ possibilities for this. We then choose one of the two orientations of the sphere, one of the two labellings of the pseudocircles, and one of the $k$ possible crossing-free intervals for the startpoint of each pseudocircle. Thus each isomorphism class yields at most $4\cdot2\cdot2\cdot k^2=16k^2$ codes. Using  Lemma~\ref{lem:meanbound} we obtain
\begin{equation}\label{eq:eq7}
|\PPi_{k,2}|\le 16k^2\cdot D^k
\end{equation}
for every even integer $k\ge2$.

Now let $\Sigma$ be a code of size $3$. There are three $2$-subcodes, namely $\Sigma[1,2]$, $\Sigma[1,3]$, and $\Sigma[2,3]$, and so there are at most $|\PPi_{k,2}|^3$ possibilities for them. Once these three subcodes are fixed, consider one of the three keys of $\Sigma$. Its restrictions to the other two pseudocircles are two fixed strings, each of length $k$. The key is obtained by merging these two strings while preserving the order within each of them. There are at most $\binom{2k}{k}$ ways to do this, since it is enough to choose which $k$ of the $2k$ positions are occupied by the first string. Since there are three keys, it follows that
\begin{equation}\label{eq:tcc}
|\PPi_{k,3}|\le |\PPi_{k,2}|^3\cdot \binom{2k}{k}^{\!3}.
\end{equation}
The choices counted on the right need not all give valid codes, but this does not affect the validity of the inequality. Using $\binom{2k}{k}<4^k$ and~\eqref{eq:eq7}, it follows that 
\[
|\PPi_{k,3}|<(16k^2)^3 D^{3k}(4^k)^3=4096k^6(64D^3)^k.
\]
\end{proof}


\begin{proof}[Proof of the upper bound in Theorem~\ref{thm:main} for $|\CC_{k,m}|$]
Let $m\ge3$ and fix any $b>64D^3$. Since $4096k^6(64D^3)^k<b^k$ holds for all sufficiently large $k$, Lemma~\ref{lem:third} implies that to prove the upper bound it suffices to show that
\begin{equation}\label{eq:suff1}
|\CC_{k,m}|\le |\PPi_{k,3}|
\end{equation}
for every even integer $k \ge 2$.

Let $q:=|\PPi_{k,3}|$. By Ramsey's theorem there is an $n\ge m$ such that every colouring of the (unordered) triples of $\{1,\ldots,n\}$ with $q$ colours has an $m$-element subset all of whose triples have the same colour. Let $A\in\CC_{k,m}$. Since $A$ is packable, there is an $A$-packed arrangement $B$ of size $n$. Choose a code $\Omega$ of $B$, with any labelling and startpoints, and colour each triple $\{i,j,\ell\}$, $i<j<\ell$, by the induced subcode $\Omega[i,j,\ell]\in\PPi_{k,3}$.

Ramsey's theorem gives an $m$-element subset $S$ all of whose triples have the same colour. Denote this common colour, which is a code in $\PPi_{k,3}$, by $\Pi_A$. Let $A^*=B[S]$ and let $\Sigma_A=\Omega[S]$. Then $A^*$ is isomorphic to $A$ and $\Sigma_A$ is a code of $A^*$. Moreover, as we pointed out at the end of Section~\ref{sub:subcodes}, taking subcodes in stages has the same effect as taking the corresponding subcode in one step. Hence every $3$-subcode of $\Sigma_A$ is equal to $\Pi_A$.

In this way for each $A\in\CC_{k,m}$ we obtain a corresponding code $\Pi_A\in\PPi_{k,3}$. To prove~\eqref{eq:suff1}, it remains to show that two non-isomorphic arrangements in $\CC_{k,m}$ cannot give the same code $\Pi_A$.

To prove this suppose that $A'\in\CC_{k,m}$ and that $\Pi_A=\Pi_{A'}$. Applying the construction above to $A'$, we obtain an arrangement $(A')^*$ isomorphic to $A'$ and a code $\Sigma_{A'}$ of $(A')^*$ with the property that every $3$-subcode of $\Sigma_{A'}$ is equal to $\Pi_{A'}$. Therefore every $3$-subcode of $\Sigma_A$ is equal to the corresponding $3$-subcode of $\Sigma_{A'}$. By Lemma~\ref{lem:triplesdo} this implies that $\Sigma_A=\Sigma_{A'}$, and by Observation~\ref{obs:3circ} it follows that $A^*$ and $(A')^*$ are isomorphic. Since $A^*$ is isomorphic to $A$ and $(A')^*$ is isomorphic to $A'$, it follows that $A$ and $A'$ are isomorphic. Thus two non-isomorphic arrangements in $\CC_{k,m}$ give distinct codes in $\PPi_{k,3}$, and~\eqref{eq:suff1} follows. 
\end{proof}

\subsection{Proof of the upper bound in Theorem~\ref{thm:main} for $\LL_{k,m}$}\label{sec:uplin}

We finish the proof of Theorem~\ref{thm:main} with the upper bound for $|\LL_{k,m}|$. Codes for pseudoline arrangements are defined as in Section~\ref{sec:codes}, except that on each pseudoline we choose a direction instead of a startpoint. More precisely, label the pseudolines $1,\ldots,n$ and choose a direction on each. Suppose that a crossing of $i$ and $j$ is the $s$th crossing with $i$ encountered when traversing $j$. In the key of $i$, record this crossing as $j_s^+$ if $i$ crosses $j$ from the right side to the left side of the oriented pseudoline $j$, and as $j_s^-$ otherwise. Reading the crossings along $i$ in its chosen direction gives the key of $i$, and the ordered collection of all the keys is a \emph{linear code}. We denote by $\PPi^{\mathrm L}_{k,n}$ the collection of all linear codes of $k$-arrangements of $n$ pseudolines. 

Subcodes are defined by deletion, exactly as before, and as in Section~\ref{sub:subcodes} taking subcodes in stages has the same effect as taking the corresponding subcode in one step. A linear code also determines its arrangement up to isomorphism. To see this we take a closed disk containing all crossings and meeting each pseudoline exactly twice. We discard the part outside the disk and turn the loose ends into degree $1$ vertices, so that the arrangement becomes a standard embedded graph. The linear code determines this embedded graph and its rotation system, exactly as in Section~\ref{sec:codes}. Its outer face is determined as well: it is the unique face incident with the degree $1$ vertices. Thus the plane embedding is determined, and restoring the disjoint tails outside the disk recovers the arrangement up to isomorphism. Finally, the proof of Lemma~\ref{lem:triplesdo} applies without change, so a linear code of size at least $3$ is determined by its subcodes of size $3$.

\begin{proof}[Proof of the upper bound in Theorem~\ref{thm:main} for $|\LL_{k,m}|$]
Fix any $b>64D^3$. A fixed isomorphism class of $k$-arrangements of two pseudolines gives at most $16$ linear codes. Indeed, there are two possible orientations of the plane, two labellings of the pseudolines, and two choices of direction on each pseudoline. The number of these isomorphism classes is $|L(k,2)|$, which is less than $D^k$ by Lemma~\ref{lem:meanbound}, with $D$ as in the statement of the lemma. Thus $|\PPi^{\mathrm L}_{k,2}|<16 D^k$.

Once the three subcodes of size $2$ of a linear code of size $3$ are fixed, each key is obtained by merging two prescribed strings of length $k$, while preserving the order within each string. Therefore, as in the proof of Lemma~\ref{lem:third},
\[
|\PPi^{\mathrm L}_{k,3}|\le |\PPi^{\mathrm L}_{k,2}|^3\binom{2k}{k}^{\!3}<4096\cdot\bigl(64D^3\bigr)^k<b^k
\]
for all sufficiently large $k$.

For $m\ge3$, we repeat the Ramsey argument used for pseudocircles, now using linear codes. If two arrangements give the same code of size $3$ in that argument, their full linear codes are equal, and hence the arrangements are isomorphic. Therefore for all sufficiently large $k$ we have
\[
|\LL_{k,m}|\le |\PPi^{\mathrm L}_{k,3}|<b^k.
\]
\end{proof}

\section{Concluding remarks}\label{sec:concluding}

We conclude with three remarks. In Section~\ref{sub:regpat} we discuss the relationship between regular path systems and $k$-arrangements of pseudolines. In Section~\ref{sub:regmaps} we point out that the constant in Lemma~\ref{lem:third} can be improved by exploiting the relationship between $k$-arrangements of pseudocircles and $4$-regular planar maps. Finally, in Section~\ref{sub:pacmea} we give a more detailed explanation of the difficulty of finding an explicit characterization of all $k$-arrangements (of pseudolines or pseudocircles) that are packable.

\subsection{Regular path systems and $x$-monotone $k$-arrangements}\label{sub:regpat}

Dobbins, Holmsen, and Hubard~\cite{DHH} study arrangements of pseudolines in the closely related setting of {\em regular path systems}. As we illustrate in Figure~\ref{fig:dhh1}, a {\em path system} consists of a finite collection of paths in a vertical strip, where each path is the graph of a continuous function $[0,1]\to\mathbb R$. Any two paths meet in a finite number of points, at which they cross, and no three paths meet at a common point. The paths are labelled from bottom to top according to the order of their left endpoints. 

For three paths labelled $i<j<\ell$, their {\em signature} is the word on $x,y,z$ that records their crossings from left to right, using $x$ for crossings of $i$ and $j$, $y$ for crossings of $i$ and $\ell$, and $z$ for crossings of $j$ and $\ell$. We refer the reader to Figure~\ref{fig:dhh1} and its caption for an illustration.

\begin{figure}[ht!]
\centering
\def\ta#1{{\Scale[1.4]{#1}}}
\scalebox{0.7}{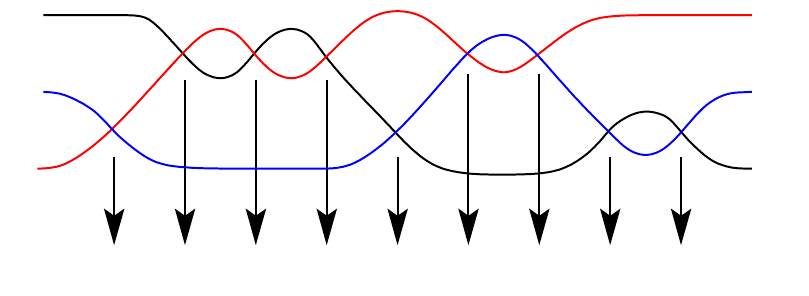}
\caption{The system depicted in the figure has signature $\tau=xy^3zx^2z^2$.}
\label{fig:dhh1}
\end{figure}

A system of at least four paths is {\em regular} if all its three-path subsystems have the same signature. This common signature is the {\em signature} of the system. A signature $\tau$ is {\em extendable} if it is the signature of a regular system of four paths. Dobbins, Holmsen, and Hubard characterized the extendable signatures, and proved that if $\tau$ is extendable, then for every $n\ge 4$ there is a regular system of size $n$ with signature $\tau$ (see~\cite[Section 5]{DHH}).

The signature of a regular system determines the order in which each path meets the others~\cite[see Lemmas~3.12,~4.8 and~4.9]{DHH}, and hence determines the system up to isomorphism. Therefore, if we extend each path infinitely in both directions, a regular system of size $m$ in which every pair of paths crosses exactly $k$ times gives a packable $k$-arrangement of $m$ $x$-monotone pseudolines.

Moreover, it can be verified that the characterization by Dobbins, Holmsen and Hubard~\cite{DHH} actually identifies all packable $k$-arrangements of $m$ $x$-monotone pseudolines. From their characterization one can obtain a lower bound on the number of these packable arrangements, which thus provides a lower bound on $|\LL_{k,m}|$. According to our calculations, as one would expect this bound is slightly better than the simple bound we obtained in Section~\ref{sec:lower}, but is still far from the upper bound we proved in Section~\ref{sec:uplin}.

\subsection{Arrangements of pseudocircles and $4$-regular planar maps}\label{sub:regmaps}

We now note that the upper bound for $|\CC_{k,m}|$ can be improved using a result from the enumerative theory of planar maps. 

Let $B$ be a $k$-arrangement of $n\ge 2$ pseudocircles. As we did in Section~\ref{sec:codes}, we may regard $B$ as a $4$-regular embedded graph (that is, plane map) $G_B$. It is straightforward to see that $G_B$ is $2$-connected and loopless. In the case $n=3$ it has $3k$ vertices and $6k$ edges, and so by Euler's formula it has $3k+2$ faces. Ren, Liu and Li~\cite{RLL} proved that the number of rooted $2$-connected loopless $4$-regular planar maps with $f+1$ faces is asymptotically $c\cdot f^{-5/2}(196/27)^f$ for some constant $c$. Since two arrangements of pseudocircles are isomorphic precisely when their maps are homeomorphic, and since an unrooted map has at least one rooting, we conclude that the set $C(k,3)$ of isomorphism classes of $k$-arrangements of three pseudocircles satisfies that
\[
|C(k,3)|\ \le\ c_0 \cdot\Bigl(\frac{196}{27}\Bigr)^{3k}
\]
for some absolute constant $c_0$. 

Similarly as in the proof of Lemma~\ref{lem:third}, each isomorphism class of $k$-arran\-gements of three pseudocircles yields at most $2^3\cdot 2\cdot 3!\cdot(2k)^3$ codes: indeed, we must specify the bounded side of each of the three pseudocircles, an orientation, a labelling, and a startpoint on each pseudocircle. Therefore $|\PPi_{k,3}|$ is at most a polynomial in $k$ times $|C(k,3)|$. Using this in~\eqref{eq:suff1} instead of Lemma~\ref{lem:third} shows that the upper bound for $|\CC_{k,m}|$ in Theorem~\ref{thm:main} holds with any $b>(196/27)^3\approx 382.54$, whereas the constant $64D^3$ obtained in Section~\ref{sec:upcirc} is considerably larger.

We chose to give the argument of Section~\ref{sec:upcirc} as the proof for two reasons. First, our argument is elementary and self-contained, whereas the asymptotics given by Ren, Liu and Li~\cite{RLL} rely on a singularity analysis of an algebraic generating function. Second, our proof treats pseudolines and pseudocircles uniformly, while the map-theoretic argument does not immediately apply to pseudolines.

\subsection{Packable arrangements and meanders}\label{sub:pacmea}

We finally discuss the reason behind our passing remark in Section~\ref{sec:intro} to the effect that a full answer to Question (I) seems hopeless at the moment (for pseudolines, although analogous complications apply for pseudocircles). The reason, as we are about to argue, is that a full classification of packable $k$-arrangements is at least as complicated as a full classification of meanders, a classical long-standing open question.

Recall from Section~\ref{sec:upperbound} that $L(k,2)$ denotes the set of isomorphism classes of $k$-arrangements of two pseudolines. Straightening one of the two pseudolines into a horizontal line turns such an arrangement into an open meander of order $k$, and conversely every open meander of order $k$ arises in this way. The choices involved are which of the two pseudolines is straightened and how it is straightened: two straightenings of the same pseudoline differ by a self-homeomorphism of the plane taking the line to itself, and there are four such classes of these up to equivalence. Therefore each element of $L(k,2)$ corresponds to at least one and at most eight open meanders of order $k$. In this sense $L(k,2)$ {\em is} the set of open meanders of order $k$.

Let $R_m$ denote the cyclic $1$-arrangement of size $m$, let $A\in L(k,2)$, and take a closed disk $\Delta$ containing all the crossings of $A$. We let $R_m(A)$ be the arrangement obtained from $R_m$ by replacing a small disk around each crossing with a copy of $A\cap\Delta$.

\begin{observation}\label{obs:meanpack}
Let $k$ be odd and let $m\ge2$. Then every $A\in L(k,2)$ induces a packable $k$-arrangement $R_m(A)$ of $m$ pseudolines, and the map $A\mapsto R_m(A)$ is injective. In particular $\LL_{k,2}=L(k,2)$.
\end{observation}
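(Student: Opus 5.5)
The plan is to handle the three claims in turn: $R_m(A)$ is a well-defined $k$-arrangement, it is packable, and $A$ can be recovered from it. The equality $\LL_{k,2}=L(k,2)$ then follows from the case $m=2$ together with packability.

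\emph{Well-definedness.} Let $a_1,a_2$ be the two pseudolines of $A$, and let $\Delta$ meet each $a_i$ in a single arc whose endpoints lie on $\partial\Delta$. Because $k$ is odd, the arc of $a_1$ separates $\Delta$ into two sides and $a_2$ crosses it an odd number of times. Hence the two endpoints of $a_2$ lie on opposite sides, so the four endpoints alternate around $\partial\Delta$. This is exactly the boundary pattern of a small disk around a simple crossing of two curves.

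So the local picture at every crossing of $R_m$ can be replaced by a homeomorphic copy of $A\cap\Delta$ with matching boundary. To make this canonical I fix a rule. Label the pseudolines of $R_m$ as $1,\ldots,m$ in cyclic (slope) order, with the left ends ordered from bottom to top. At the crossing of $i<j$, put $a_1$ on pseudoline $i$ and $a_2$ on $j$, and send the "left" endpoint of each $a_t$ (with respect to a fixed orientation of $A$) to the left-incoming end of the corresponding line.

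After the replacement, each pair of curves meets exactly $k$ times, all crossings are proper, and no three curves meet, since crossings of distinct pairs lie in disjoint disks. Each curve is still bi-infinite and simple. So $R_m(A)$ is a $k$-arrangement.

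\emph{Packability.} I will show that every $m$-subarrangement of $R_n(A)$ is isomorphic to $R_m(A)$ for all $n\ge m$. First, every $m$-subarrangement of $R_n$ is isomorphic to $R_m$, by an isomorphism preserving the increasing relabelling and the left/right ends. Second, the insertion rule depends only on the relative order of the two labels and on the left/right ends. Together these show that deleting curves commutes with the replacement, which gives the claim.

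\emph{Injectivity.} Take the $2$-subarrangement of $R_m(A)$ on pseudolines $1,2$. By the deletion argument it is $R_2(A)$. This is a single crossing with $A\cap\Delta$ inserted and tails outside $\Delta$, so it is isomorphic to $A$.

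Now suppose $R_m(A)\cong R_m(A')$. Then their $2$-subarrangements, all of which are isomorphic to $A$ and $A'$ respectively, must agree, so $A\cong A'$.

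\emph{The equality $\LL_{k,2}=L(k,2)$.} The inclusion $\LL_{k,2}\subseteq L(k,2)$ is trivial. Conversely, for $A\in L(k,2)$ and any $p\ge2$, the arrangement $R_p(A)$ is $A$-packed, because every $2$-subarrangement is $R_2(A)\cong A$. Hence $A$ is packable.

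\emph{Expected obstacle.} The main delicate step is making the gluing consistent, so that deletion really commutes with replacement. In particular I must check that the alternation on $\partial\Delta$ lets me match both the correct curve and the correct ends at every crossing. Since the notion of isomorphism allows reflections, mirror issues should not obstruct this, but the endpoint-matching convention has to be fixed once and for all before the deletion argument goes through.
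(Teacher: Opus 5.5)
Your proposal is correct and follows the same route as the paper. Odd $k$ makes the four endpoints of $A\cap\Delta$ alternate, so the substitution fits a simple crossing; packability comes from deletion commuting with substitution in $R_n(A)$; and injectivity and $\LL_{k,2}=L(k,2)$ come from every pair of pseudolines inducing $R_2(A)\cong A$. Your fixed insertion rule, which depends only on label order and left/right ends, makes explicit a consistency point that the paper's short proof leaves implicit.
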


\def\bpl{{\Scale[3]{}}}
\def\brep{{\Scale[1.4]{\text{\rm replace each crossing}}}}
\def\abrep{{\Scale[1.4]{\text{\rm \hglue 0.9 cm with } A}}}
\def\aerretres{{\Scale[1.8]{R_3(A)}}}
\def\erretres{{\Scale[1.8]{R_3}}}
\def\arra{{\Scale[1.8]{A}}}
\begin{figure}[ht!]
\centering
\scalebox{0.58}{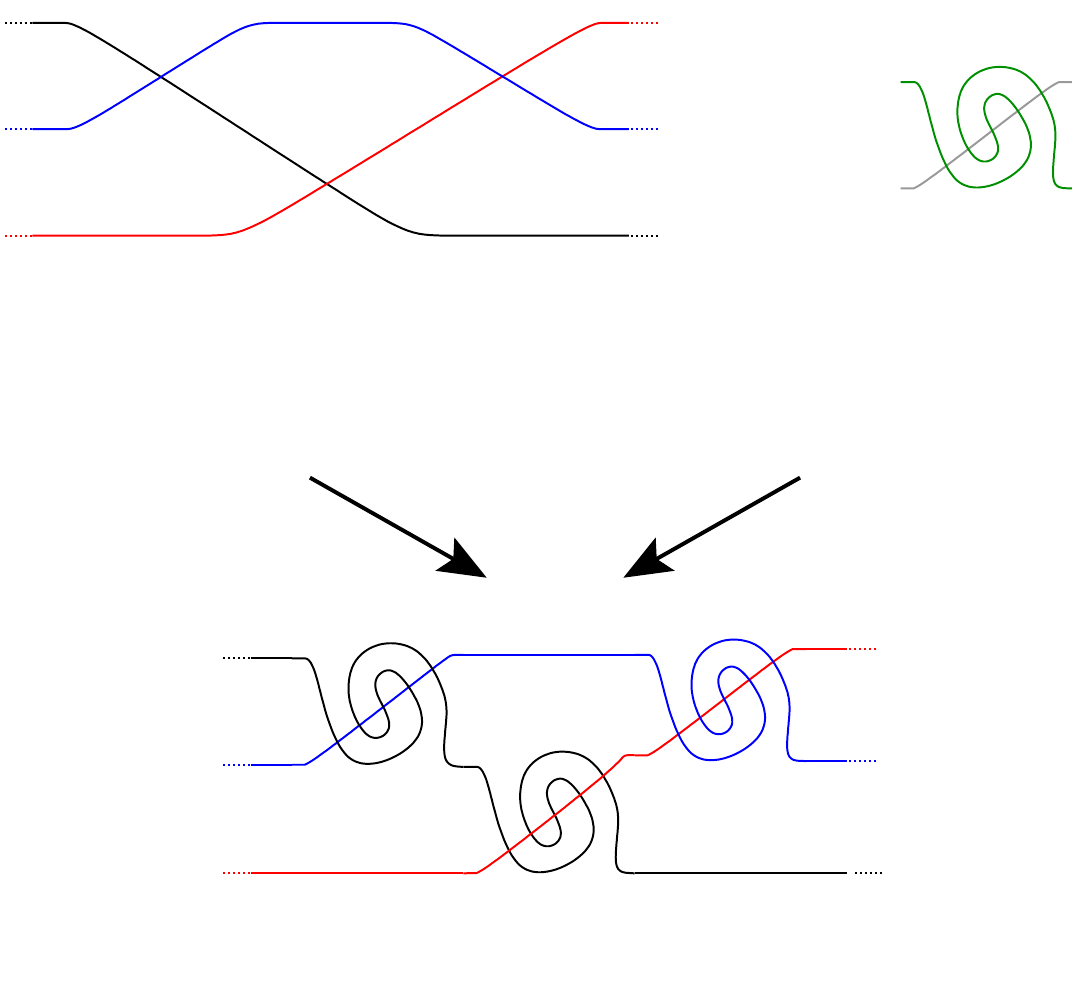}
\caption{Illustration of Observation~\ref{obs:meanpack}. We replace each crossing in the cyclic $1$-arrangement $R_3$ of size $3$ with a copy of the $5$-arrangement $A$ of two pseudolines. As a result we obtain a packable $5$-arrangement $R_3(A)$ of $3$ pseudolines.}
\label{fig:660}
\end{figure}

\begin{proof}
Since $k$ is odd, the two arcs of $A\cap\Delta$ exchange sides inside $\Delta$, and so this substitution is consistent with the crossing it replaces; each pair of pseudolines of $R_m(A)$ crosses exactly $k$ times. As every subarrangement of size $m$ of $R_n(A)$ is isomorphic to $R_m(A)$ for every $n\ge m$, the arrangement $R_m(A)$ is packable. 

Finally, any two pseudolines of $R_m(A)$ induce $R_2(A)=A$. This implies that the map $A\mapsto R_m(A)$ is injective, and it also implies that $\LL_{k,2}=L(k,2)$.
\end{proof}

It follows from this observation that for every odd $k$ and every $m \ge 2$ a classification of the packable $k$-arrangements of $m$ pseudolines would contain a classification of the open meanders of order $k$. Even the {\em enumeration} of meanders is a classical and difficult problem (we refer the reader again to the work by Albert and Paterson~\cite{albert} or to the survey by La~Croix~\cite{lacroix}), and this is our reason for not attempting to answer Question~(I): the packable $k$-arrangements are at least as complicated as the meanders. The analogous relation between $k$-arrangements of two pseudocircles and closed meanders of order $k/2$, illustrated in Figure~\ref{fig:closmea}, suggests the same obstruction for pseudocircles and for even $k$ in the pseudoline case.

We note in passing that combining Observation~\ref{obs:meanpack} with the correspondence between $L(k,2)$ and open meanders described above we obtain that for every odd $k$ and every $m\ge 2$
\[
|\LL_{k,m}|\ \ge\ |L(k,2)|\ \ge\ \frac{m_k}{8},
\]
where $m_k$ denotes the number of open meanders of order $k$. 

Since $m_k$ grows exponentially with a base strictly larger than $3$ (this follows from the bounds given by Albert and Paterson~\cite{albert}, together with the well-known correspondence between open meanders of order $2r-1$ and closed meanders of order $r$), for all sufficiently large odd $k$ this is a better lower bound than the one given by the construction in Section~\ref{sec:lower}. Moreover, this bound is also better than the one obtained using the characterization by Dobbins, Holmsen, and Hubard, which according to our estimates is of the order $(3+o(1))^k$. 

We do not further pursue this improvement, first of all because the construction in Section~\ref{sec:lower} is more elementary and applies uniformly to pseudolines and to pseudocircles. Finally, even this slightly improved bound is still far from the upper bounds we proved in Section~\ref{sec:uplin} or Section~\ref{sub:regmaps}. In our upper bounds we estimated the number of \emph{all} $k$-arrangements of three curves, and we did not use the packability property in any substantial way. Therefore we expect that significant improvements of the upper bound on \emph{packable} $k$-arrangements of three pseudolines or pseudocircles are still possible.

\section*{Acknowledgements}

The first author was supported by Project 23-04949X of the Czech Science Foundation (GA\v{C}R). The second and third authors were supported by CONACYT under grant 191952.

\bibliographystyle{amsplain}
\bibliography{refs}

\end{document}